\documentclass[10pt]{amsart}

\newtheorem{cotmb}{Corollary}
\newtheorem{prop}{Proposition}
\newcommand{\cal}{\mathcal}

\newtheorem{tw}{Theorem}
\newtheorem{lemma}{Lemma}


\usepackage[mathscr]{euscript}

\vspace{0.5cm}

\title{Contact twistor spaces and almost contact metric structures}
\author{Johann Davidov, Christian L. Yankov}
\address{Institute of Mathematics and Informatics \\
Bulgarian Academy of Sciences\\ Acad. G.Bonchev St. Bl.8 \\
1113 Sofia\\ Bulgaria\\ \newline \centerline{and} \newline
University of Structural Engineering and Architecture "L.Karavelov",
175 Suhodolska St., 1373 Sofia, Bulgaria }

\email{jtd@math.bas.bg}

\address{Department of Mathematics and Computer Science\\
Eastern Connecticut State University\\
Willimantic, CT 06226 \\USA} \email{yankovc@easternct.edu}

\thanks{The first named author is partially supported by  the National Science
Fund, Ministry of Education and Science of Bulgaria under contract
DFNI-I 02/14. }

\begin{document}

\date{}

\maketitle

\noindent {\it Abstract}. The notions of a twistor space of a
contact manifold and a contact connection on such a manifold have
been introduced by L. Vezzoni as extensions of the corresponding
notions in the case of a symplectic manifold. Given a contact
connection on a contact manifold one can define an almost
$CR$-structure on its twistor space and Vezzoni has found the
integrability condition for this structure. In the present paper
it is observed that the $CR$-structure is induced by an almost
contact metric structure. The main goal of the paper is to obtain
necessary and sufficient conditions for normality of this
structure in terms of the curvature of the given contact
connection. Illustrating examples are discussed at the end of the
paper.

\smallskip \noindent {\it Mathematics Subject Classification (2010)}:
53C28; 53D15

\smallskip \noindent {\it Key words}: twistor spaces,  almost contact metric
structures.

\thispagestyle{empty}

\section{Introduction}

Let $M$ be a contact manifold with contact form $\alpha$ and contact
distribution ${\cal D}$. The restriction of $d\alpha$ to ${\cal D}$
is a symplectic form and L. Vezzoni \cite{Vezz} has defined the
contact twistor space of $M$ as the bundle over $M$ whose fiber at
any point $p\in M$ consists of all complex structures on ${\cal
D}_p$, the fibre of ${\cal D}$ at $p$, compatible with the
symplectic form $d\alpha|{\cal D}$. He has also introduced the
notion of a contact connection on $M$ and showed that, given such a
connection $\nabla$, one can define a natural almost $CR$-structure
on the contact twistor space of $M$ in a way that resembles the
standard twistor construction. Vezzoni has found the integrability
condition for this almost $CR$-structure in terms of the curvature
of the connection $\nabla$.

In this note we observe that the $CR$-structure considered in
\cite{Vezz} is induced by an almost contact metric structure on
the contact twistor space of $M$. As usual in twistor theory, one
can define one more almost contact metric structure and the main
purpose of this paper is to discuss the normality of these almost
contact metric structures. Recall that normality is an important
property of an almost contact manifold $N$, which means that the
product manifold $N\times S^1$ is a complex manifold with the
complex structure induced by the almost contact one (cf., for
example, \cite{Blair}). As one can expect, the integrability
condition for the first almost contact metric structure can be
expressed in terms of the curvature of the connection $\nabla$,
while the second one is never normal. Considering the  two induced
$CR$-structures, we reprove the Vezzoni integrability result for
the first one and show that the second $CR$-structure is never
integrable. Examples illustrating the obtained results are
discussed in the last section of the paper.

\section{Preliminaries}

\subsection{Compatible complex structures on a symplectic vector
space} Let ${\cal D}$ be a $2n$-dimensional real vector space
endowed with a non-degenerate skew-symmetric $2$-form $\omega$.
Denote by ${\cal Z}={\cal Z}({\cal D},\omega)$ the set of all
complex structures $J$ on the vector space ${\cal D}$ compatible
with the form $\omega$, i.e. such that $\omega(Jx,Jy)=\omega(x,y)$
and $\omega(x,Jx)>0$ if $x\neq 0$. This set is a submanifold of the
symplectic Lie algebra
$$
sp(\omega)=\{S\in End({\cal
D}):~\omega(Sx,y)+\omega(x,Sy)=0,~~x,y\in{\cal D}\}.
$$
The tangent space $T_J{\cal Z}$ (considered as a subspace of
$sp(\omega)$) is
$$
T_J{\cal Z}=\{V\in End({\cal
D}):~VJ+JV=0,~\omega(Vx,y)+\omega(x,Vy)=0\}.
$$
The smooth manifold ${\cal Z}$ admits a natural almost complex
structure ${\cal J}$ defined by
$$
{\cal J}V=JV~~\textrm{for}~~V\in T_J{\cal Z}.
$$

Every $J\in{\cal Z}$ is an orthogonal transformation of ${\cal D}$
with respect to the Euclidean metric $g_J(x,y)=\omega(x,Jy)$. If
$E_1,\dots,E_{2n}$ is a $g_J$-orthonormal basis such that
$JE_i=E_{i+n}$, $i=1,\dots,n$, then
$$
\omega(E_i,E_j)=\omega(E_{i+n},E_{j+n})=0,\quad
\omega(E_i,E_{j+n})=\delta_{ij}.
$$
A basis that satisfies the latter identities is called symplectic.
Vice versa, given a symplectic basis $E_1,\dots,E_{2n}$, define a
complex structure $J$ on the vector space ${\cal D}$ setting
$JE_{i}=E_{i+n}$, $JE_{i+n}=-E_{i}$, $i=1,\dots,n$. Then
$J\in{\cal Z}$ and the symplectic basis is $g_J$-orthonormal.

The metric $g_J$ induces a metric $G_J$ on the vector space
$End({\cal D})$,
$$
G_J(A,B)=Trace\{{\cal D}\ni x\to g_J(Ax,Bx)\},\quad A,B\in End({\cal
D}).
$$
Then ${\cal Z}\ni J\to G_J|T_J{\cal Z}$ is a (smooth) Riemannian
metric on the manifold ${\cal Z}$ compatible with the almost
complex structure ${\cal J}$.

Let $J\in{\cal Z}$ and let $E_{\alpha}$, $\alpha=1,\dots,2n$, be
an orthonormal basis of ${\cal D}$ with respect to the metric
$g_J$ such that $JE_i=E_{i+n}$, $i=1,\dots,n$. Define a basis
$L_{\alpha\beta}$ of the vector space $End({\cal D})$ by
$$L_{\alpha\beta}E_{\gamma}=\delta_{\alpha\gamma}E_{\beta},\quad \alpha,\beta,\gamma=1,\dots,2n.$$
This basis is orthonormal with respect to the metric $G_J$ on
$End({\cal D})$ induced by $g_J$. Set
\begin{equation}\label{Vij}
\begin{array}{c}
V_{ij}=\displaystyle{\frac{1}{2}}(L_{i+n,j}+L_{j+n,i}+L_{i,j+n}+L_{j,i+n}),\quad
i\neq j,\quad i,j=1,\dots,n,\\[6pt]
V_{ii}=\displaystyle{\frac{1}{\sqrt 2}}(L_{i+n,i}+L_{i,i+n}),\quad
i=1,\dots,n.
\end{array}
\end{equation}
These endomorphisms of ${\cal D}$ are orthonormal, lie in $T_J{\cal
Z}$ and
\begin{equation}\label{JVij}
\begin{array}{c}
{\cal
J}V_{ij}=\displaystyle{\frac{1}{2}}(L_{i+n,j+n}+L_{j+n,i+n}-L_{i,j}-L_{j,i}),\quad i\neq j\quad i,j=1,\dots,n,\\[6pt]
{\cal J}V_{ii}=\displaystyle{\frac{1}{\sqrt
2}}(L_{i+n,i+n}-L_{i,i}),\quad i=1,\dots,n.
\end{array}
\end{equation}
Thus $\{V_{ij},{\cal J}V_{ij}:1\leq i\leq j\leq n\}$ is a
$G_J$-orthonormal basis of $T_J{\cal Z}$.

\smallskip

Denote by $symm(\omega)$ the set of endomorphisms $S$ of ${\cal D}$
that are $\omega$-symmetric, i.e. $\omega(Sx,y)=\omega(x,Sy)$. Then
$$
End({\cal D})=sp(\omega)\oplus symm(\omega).
$$
Given $A\in End({\cal D})$ define an endomorphism $A^{\ast}$ of
${\cal D}$ by
$$
\omega(A^{\ast}x,y)=\omega(x,Ay).
$$
Then $\check{A}=\frac{1}{2}(A-A^{\ast})$ and
$\hat{A}=\frac{1}{2}(A+A^{\ast})$ are the projections of $A$ onto
$sp(\omega)$ and $symm(\omega)$, respectively.

 For every $J\in{\cal Z}$, we also have the direct sum decomposition
$$
sp(\omega)=T_J{\cal Z}\oplus \{S\in sp(\omega):~JS-SJ=0\}.
$$
The projection of an endomorphism $U\in sp(\omega)$ to $T_J{\cal Z}$
is given by $pr_J(U)=\frac{1}{2}(U+JUJ)$. Thus
$$
pr_J(A)=\frac{1}{2}(\check{A}+J\check{A}J).
$$
is the projection of $A\in End({\cal D})$ to $T_J{\cal Z}$ with
respect to the decomposition
$$
End({\cal D})=T_J{\cal Z}\oplus \{S\in sp(\omega):~JS-SJ=0\}\oplus
symm(\omega).
$$

\smallskip

Let $U\in T_J{\cal Z}$ and let $V$ be a vector field on a
neighbourhood of $J$.  Take any smooth function $sp(\omega)\to
sp(\omega)$ that coincides with V on a neighbourhood of $J$ and
denote this function again by $V$. It follows from the Koszul
formula that the Levi-Civita connection $D$ of the Riemannian
manifold $({\cal Z},G)$ is given by
\begin{equation}\label{D}
(D_{U}V)_J=pr_J\big(V'(J)(U)\big)
\end{equation}
where $V'(J)\in End(sp(\omega))$ is the derivative of the function
$V:sp(\omega)\to sp(\omega)$. Then
$$
(D_{U}{\cal
J}V)_J=pr_J\big(UV_J+J(V'(J))(U)\big)=Jpr_J\big(V'(J)(U)\big)={\cal
J}(D_{U}V)_J
$$
Therefore $(G,{\cal J})$ is a K\"ahler structure on ${\cal Z}$.

The Lie group $Sp(\omega)$ of linear transformations $Q$ of ${\cal
D}$ that preserve $\omega$, $\omega(Qx,Qy)\\=\omega(x,y)$, acts
transitively on ${\cal Z}$ by conjugation. Indeed, if $J,J'\in{\cal
Z}$, take symplectic bases $E_{\alpha}$ and $E'_{\alpha}$,
$\alpha=1,\dots,2n$, determined by $J$ and $J'$, respectively. Then,
if $Q$ is the linear transformation of ${\cal D}$ defined by
$QE_{\alpha}=E'_{\alpha}$, we have $Q\in Sp(\omega)$ and
$J'=QJQ^{-1}$. Denote the isotropy subgroup of $Sp(\omega)$ at a
point $J^0\in {\cal Z}$ by $U(J^0)$. Thus ${\cal Z}$ is the
homogeneous space $Sp(\omega)/U(J^0)$. Note also that the complex
structure ${\cal J}$ and the metric $G$ on ${\cal Z}$ are invariant
under the action of $Sp(\omega)$.

The Lie algebras of the groups $Sp(\omega)$ and $U(J_0)$ are
$sp(\omega)$ and $\frak u=\{S\in sp(\omega):~SJ_0-J_0S=0\}$. Set
$\frak m=\{S\in sp(\omega):~SJ_0+J_0S=0\}$. Then
$$
sp(\omega)=\frak u\oplus\frak m,\quad [\frak u,\frak m]\subset \frak
m,\quad [\frak m,\frak m]\subset\frak u.
$$
Fix a symplectic basis $E^0_{\alpha}$, $\alpha=1,\dots,2n$, such
that $J^0E^0_i=E^0_{i+n}$, $J^0E^0_{i+n}=-E^0_{i}$, $i=1,\dots,n$.
Consider the isomorphism of ${\cal D}$ with ${\Bbb R}^{2n}$ that
sends $E^0_{\alpha}$ to the standard basis of ${\Bbb R}^{2n}$ .
Then $Sp(\omega)\cong Sp(2n,{\Bbb R})$ and $U(J^0)\cong U(n)$.
Thus ${\cal Z}\cong Sp(\omega)/U(J^0)\cong Sp(2n,{\Bbb R})/U(n)$
is a symmetric space. Recall that $Sp(2n,{\Bbb R})/U(n)$ is
diffeomorphic to the Siegel upper half plane ${\Bbb S}_n$, which
is the set of complex symmetric $n\times n$-matrices $Z=X+iY$ with
positive definite imaginary part $Y$ (see, for example,
\cite{McDS} or \cite{V86}). Indeed, let
$$
\psi=\left(\begin{array}{cc}
           A & B \\
           C & D \\
           \end{array}
                      \right)
$$
be the matrix representation of a transformation $\psi\in
Sp(2n,{\Bbb R})$ with respect to the standard basis of ${\Bbb
R}^{2n}$. Then
$$
\psi\cdot\! Z=(AZ+B)(CZ+D)^{-1}
$$
defines a transitive action of $Sp(2n,{\Bbb R})$ on ${\Bbb S}_n$.
The isotropy subgroup at the matrix $iI\in {\Bbb S}_n$, $I$ being
the identity $n\times n$-matrix,  is $U(n)$. Thus $Sp(2n,{\Bbb
R})/U(n)\cong {\Bbb S}_n$. Let $J(Z)$ be the complex structure
corresponding to $Z=X+iY\in {\Bbb S}_n$ under the composite
diffeomorphism ${\Bbb S}_n\cong Sp(2n,{\Bbb R})/U(n)\cong{\cal Z}$.
Denote by $Y^{1/2}$ the principal square root of the symmetric
positive definite matrix $Y$. Then the matrix
$$
\psi=\left(\begin{array}{cc}
           Y^{1/2} & XY^{-1/2} \\
           0 & Y^{-1/2} \\
           \end{array}
                      \right)
$$
represents a transformation in $Sp(2n,{\Bbb R})$ (smoothly
depending on $(X,Y)$) such that $\psi\cdot\!(iI)=Z$. Hence
$$
J(Z)=\psi \left(\begin{array}{cc}
           0 & I\\
           -I & 0 \\
           \end{array}
                      \right)\psi^{-1}
=\left(\begin{array}{cc}
           -XY & Y+XY^{-1}X \\
           -Y^{-1} & Y^{-1}X \\
           \end{array}
                      \right).
$$
It is easy to check by means of the latter formula that the
diffeomeorphism $Z\to J(Z)$ is holomorphic, ${\Bbb S}_n$ being
considered with its natural complex structure as an open subset of
the vector space of symmetric complex matrices. The manifold ${\Bbb
S}_n$ admits a $Sp(2n,{\Bbb R})$-invariant K\"ahler metric $H$
introduced and studied by Siegel (see, for example, \cite{S}). For
$W=U+iV\in T_{Z}{\Bbb S}_n$, it is defined by
$$
H(W,W)=Trace(Y^{-1}UY^{-1}U+Y^{-1}VY^{-1}V)
$$
One can easily check that the biholomorphism $Z\to J(Z)$ sends the
metric $G$ on ${\cal Z}$ to the metric $2H$ on ${\Bbb S}_n$.

\subsection{$Sp(\omega)$-decomposition of curvature tensors}

Let $(M,\omega)$ be a symplectic manifold and $\nabla$ a linear
torsion-free connection on $TM$ for which $\nabla\omega=0$. Such a
connection always exists \cite{Lb} (see also \cite{V85}). Let $R$ be
the curvature tensor of $\nabla$.

\smallskip

\noindent {\bf Convention}. We adopt the following definition for
the curvature tensor
$R(X,Y)=\nabla_{[X,Y]}-[\nabla_{X},\nabla_{Y}]$. This differs by a
sign from the definition used in \cite{V85,Vezz}.

\smallskip

Set
$$
R(X,Y,Z,U)=\omega(R(X,Y)Z,U),\quad X,Y,Z,U\in TM.
$$

It has been observed in \cite{V85} that this covariant $4$-tensor
has the following properties:
\begin{itemize}
\item[$(i)$] \quad $R(X,Y,Z,U)=-R(Y,X,Z,U)$;
\item[$(ii)$]\quad $R(X,Y,Z,U)=R(X,Y,U,Z)$;
\item[$(iii)$]\quad $R(X,Y,Z,U)+ R(Y,Z,X,U)+ R(Z,X,Y,U)=0$.
\end{itemize}

\smallskip

Now consider the space ${\mathscr S}$ of covariant $4$-tensors on
a symplectic vector space $({\cal D},\omega)$ having these
properties. The group $Sp(\omega)$ acts on the space ${\mathscr
S}$ in a natural way. The irreducible decomposition of ${\mathscr
S}$ under the action of $Sp(\omega)$ has been found in \cite{V85}.
To describe this decomposition we introduce the Ricci tensor of a
tensor $R\in {\mathscr S}$ in the usual way. Let $R(X,Y)Z$ be the
$(1,3)$-tensor defined by
$$
\omega(R(X,Y)Z,U)=R(X,Y,Z,U).
$$
Then the Ricci tensor of $R$ is defined as
$$
\sigma_R(X,Y)=Trace\{Z\to R(X,Z)Y\}.
$$
Thus, if $E_1,\dots,E_{2n}$ is a symplectic basis,
$$
\sigma_R(X,Y)=\sum_{i=1}^n [R(X,E_i,Y,E_{i+n})-R(X,E_{i+n},Y,E_i)].
$$
It has been shown in \cite{V85} that the Ricci tensor $\sigma_R$
is symmetric. Let ${\mathscr S}^0$ be the subspace of tensors in
${\mathscr S}$ with vanishing Ricci tensor. Denote by ${\mathscr
S}^r$ the subspace of ${\mathscr S}$ consisting of all tensors in
${\mathscr S}$ of the form
\begin{equation}\label{red}
\begin{array}{r}
R(X,Y,Z,U)=\displaystyle{\frac{1}{2n+2}}\big[-\omega(X,Z)P(Y,U)+\omega(Y,U)P(X,Z)\\[8pt]
\hspace{3cm}-\omega(X,U)P(Y,Z)+\omega(Y,Z)P(X,U)\\[6pt]
-2\omega(X,Y)P(Z,U)\big]
\end{array}
\end{equation}
where $P$ is a symmetric covariant $2$-tensor.
\begin{prop} {\rm{(\cite{V85})}}\quad $(i)$ If $n=1$, then ${\mathscr S}$ is an irreducible
$Sp(\omega)$-module and ${\mathscr S}={\mathscr S}^r$;

\noindent $(ii)$ If $n>1$, we have the following
$Sp(\omega)$-irreducible decomposition
$$
{\mathscr S}={\mathscr S}^0\oplus {\mathscr S}^r.
$$
\end{prop}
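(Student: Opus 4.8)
The plan is to realize the decomposition as the splitting induced by a single $Sp(\omega)$-equivariant contraction together with an equivariant section of it. First I would introduce the two natural $Sp(\omega)$-maps relating $\mathscr S$ to the space $S^2{\cal D}^{\ast}$ of symmetric covariant $2$-tensors on ${\cal D}$: the Ricci contraction $\rho:\mathscr S\to S^2{\cal D}^{\ast}$, $\rho(R)=\sigma_R$, which lands in $S^2{\cal D}^{\ast}$ precisely because $\sigma_R$ was already noted to be symmetric, and the ``Ricci--type'' map $\iota:S^2{\cal D}^{\ast}\to\mathscr S$ sending a symmetric $P$ to the tensor $R_P$ defined by the right-hand side of \eqref{red}. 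Both are manifestly equivariant, and a direct check shows that $R_P$ does satisfy $(i)$--$(iii)$, so that $\mathscr S^r=\iota(S^2{\cal D}^{\ast})$ by definition.

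The key computational step is to evaluate $\rho\circ\iota$, i.e.\ to compute $\sigma_{R_P}$. Substituting \eqref{red} into $\sigma_{R_P}(X,Y)=\sum_{i=1}^n[R_P(X,E_i,Y,E_{i+n})-R_P(X,E_{i+n},Y,E_i)]$ and using the symplectic-trace identities for a symplectic basis, I expect the normalizing factor $\frac{1}{2n+2}$ to be exactly the one for which $\rho\circ\iota=\operatorname{id}$; in any case $\rho\circ\iota=c\operatorname{id}$ with $c\neq0$. Granting this, the elementary splitting principle for equivariant maps with $\rho\circ\iota=c\operatorname{id}$ applies: each $R\in\mathscr S$ writes uniquely as $R=(R-\tfrac1c R_{\sigma_R})+\tfrac1c R_{\sigma_R}$, the first summand lying in $\mathscr S^0=\ker\rho$ and the second in $\mathscr S^r=\operatorname{im}\iota$, while $\mathscr S^0\cap\mathscr S^r=0$ because $\rho$ is injective on $\operatorname{im}\iota$. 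This gives the $Sp(\omega)$-invariant direct sum $\mathscr S=\mathscr S^0\oplus\mathscr S^r$ together with the isomorphism $\mathscr S^r\cong S^2{\cal D}^{\ast}$.

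It remains to prove irreducibility, which is where the representation theory of $Sp(\omega)\cong Sp(2n,{\Bbb R})$ enters. Since $\mathscr S^r\cong S^2{\cal D}^{\ast}$ and a symmetric $2$-tensor has no $\omega$-trace (its contraction against the antisymmetric $\omega$ vanishes), $S^2{\cal D}^{\ast}\cong sp(\omega)$ is the adjoint module and hence a standard irreducible $Sp(\omega)$-module; so the real work is the irreducibility of $\mathscr S^0$. I would first identify $\mathscr S$ as a $GL({\cal D})$-module: properties $(i)$ and $(ii)$ place $\mathscr S$ inside $\Lambda^2{\cal D}^{\ast}\otimes S^2{\cal D}^{\ast}$, which by the Littlewood--Richardson rule is the sum of the Schur modules of types $(3,1)$ and $(2,1,1)$. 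The Bianchi identity $(iii)$ is exactly the vanishing of the alternation over the first three arguments (equivalently, the cyclic sum in $(iii)$), and since $\Lambda^3{\cal D}^{\ast}\otimes{\cal D}^{\ast}$ contains only the types $(2,1,1)$ and $(1^4)$, this alternation annihilates the $(3,1)$-summand and, by Schur's lemma and a dimension count, is injective on the $(2,1,1)$-summand. Hence $\mathscr S$ is, as a $GL({\cal D})$-module, the irreducible Schur module of type $(3,1)$. Restricting to $Sp(\omega)$ and removing the single $\omega$-trace (which by Littlewood's branching rule yields exactly one lower piece, of type $(2)$, namely $S^2{\cal D}^{\ast}$) splits it as its $\omega$-traceless part plus $S^2{\cal D}^{\ast}$; the traceless part is the symplectic module of highest weight $(3,1)$ and must coincide with $\mathscr S^0$. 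I expect this last identification---verifying irreducibility of the traceless symplectic Schur module, and that it is nonzero exactly for $2n\geq3$---to be the main obstacle, since it rests on the branching theory for $Sp(2n)$ rather than on a direct computation.

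Finally, the case $n=1$ follows from a dimension count and explains the dichotomy. When ${\cal D}$ is two-dimensional, every $R$ satisfying $(i)$ is of the form $R=\omega\otimes P$ for some covariant $2$-tensor $P$, since $\Lambda^2{\cal D}^{\ast}$ is spanned by $\omega$, and $(ii)$ says that $P$ is symmetric. Moreover $(iii)$ is then automatic: for fixed $U$ it reads $\beta(\omega(X,Y)Z+\omega(Y,Z)X+\omega(Z,X)Y)=0$ with $\beta=P(\,\cdot\,,U)$, which holds because the bracketed vector vanishes identically in dimension two. Hence $\dim\mathscr S=\dim S^2{\cal D}^{\ast}=3$, and since $\mathscr S^r\cong S^2{\cal D}^{\ast}$ is a three-dimensional subspace of $\mathscr S$ we obtain $\mathscr S=\mathscr S^r$, so $\mathscr S^0=0$ and $\mathscr S=\mathscr S^r$ is irreducible, proving $(i)$. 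This matches the vanishing of the symplectic module of type $(3,1)$ when $2n=2$.
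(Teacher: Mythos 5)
Your proposal is essentially correct, but it should be said up front that the paper offers no proof of this proposition to compare against: the statement is quoted from Vaisman \cite{V85}, and the only surrounding material is the remark that $\sigma_R$ is symmetric and that the projection onto ${\mathscr S}^r$ is given by the right-hand side of (\ref{red}) with $P=\sigma_R$. Your first two paragraphs recover exactly that: the verification that $R_P$ satisfies $(i)$--$(iii)$ is a routine grouping of terms, and the trace computation does give $\rho\circ\iota=\operatorname{id}$ on the nose --- the $2n$ terms $\omega(E_i,E_{i+n})P(X,Y)-\omega(E_{i+n},E_i)P(X,Y)$ contribute $2nP(X,Y)$, and each of the two remaining groups contributes $P(X,Y)$ via the reproducing identity $X=\sum_{i}[\omega(X,E_{i+n})E_i-\omega(X,E_i)E_{i+n}]$, so the normalization $\frac{1}{2n+2}$ is exactly right. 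This yields the invariant splitting and identifies the projection onto ${\mathscr S}^r$ as $R\mapsto R_{\sigma_R}$, in agreement with what the paper asserts after the proposition.

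Two places in the irreducibility argument need more than you wrote. First, ``Schur's lemma and a dimension count'' does not give injectivity of the alternation on the $(2,1,1)$-summand of $\Lambda^2{\cal D}^{\ast}\otimes S^2{\cal D}^{\ast}$: since that summand occurs with multiplicity one in both source and target, Schur reduces the question to whether the alternation is nonzero on it, and for that you must exhibit a witness --- e.g.\ the tensor with $R(e_1,e_2,e_3,e_3)=1=-R(e_2,e_1,e_3,e_3)$ and all other components zero has nonvanishing cyclic sum once $\dim{\cal D}\geq 3$, which covers every $n\geq 2$ (the case $n=1$ you rightly handle by the separate dimension count). Second, the branching step --- that $S^{(3,1)}$ restricted to $Sp(2n)$ is $V_{(3,1)}\oplus V_{(2)}$ with $V_{(3,1)}$ irreducible and nonzero exactly when $n\geq 2$ --- is only asserted; it does follow from Littlewood's rule (applicable here since $(3,1)$ has at most $n$ rows when $n\geq2$) together with the unitarian trick to pass between $Sp(2n,{\Bbb R})$ and $Sp(2n,{\Bbb C})$, but as you acknowledge this is where the real content lies, and it is precisely the part the paper delegates to \cite{V85}. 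With those two points supplied your argument is complete: ${\mathscr S}^0$, being an invariant complement of ${\mathscr S}^r\cong V_{(2)}\cong sp(\omega)$ with $V_{(3,1)}\not\cong V_{(2)}$, must coincide with the $V_{(3,1)}$-isotypic component.
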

The projection  to ${\mathscr S}^r$ of a tensor $R\in {\mathscr S}$
is given by the right-hand side of (\ref{red}) with $P=\sigma_R$
(\cite{V85}).

\smallskip

\noindent {\it Definition}. As in \cite{Vezz}, we shall say that a
covariant $4$-tensor $R$ in ${\mathscr S}$ is of {\it Ricci type} if
$R\in {\mathscr S}^r$ (i.e. if $R$ is a reducible symplectic
curvature tensor in the terminology of \cite{V85}).

\subsection{Contact connections} Let $M$ be a $(2n+1)$-dimensional contact manifold with
contact form $\alpha$. We denote the contact distribution
$Ker\,\alpha$ by ${\cal D}$ and the Reeb vector field by $\xi$.

Following \cite {Vezz}, a linear connection $\nabla$ on $TM$ will
be called contact, if for every $X\in TM$ and every three sections
$Y,Y_1,Y_2$ of ${\mathcal D}$ and every two sections $X_1, X_2$ of $TM$
\begin{equation}\label{ncd}
\nabla_{X}Y \mbox{ is a section of}~ {\cal D},
\end{equation}
\begin{equation}\label{lie}
\nabla_{\xi}Y=[\xi,Y],
\end{equation}
\begin{equation}\label{nxi}
\nabla_{X}\xi=0,
\end{equation}
\begin{equation}\label{nda}
(\nabla_{Y}d\alpha)(Y_1,Y_2)=0,
\end{equation}
\begin{equation}\label{tor}
[X_1,X_2]=\nabla_{X_1}X_2-\nabla_{X_2}X_1-d\alpha(X_1,X_2)\xi.
\end{equation}

The last identity implies
$\nabla_{Y_1}Y_2-\nabla_{Y_2}Y_1=[Y_1,Y_2]-\alpha([Y_1,Y_2])\xi=$
the projection of $[Y_1,Y_2]$ to ${\mathcal D}$ with respect to the
decomposition $TM={\mathcal D}\oplus {\mathbb R}\xi$. Note also that
$[\xi,Y]=\nabla_{\xi}Y\in{\cal D}$, so $\alpha([\xi,Y])=0$.

\begin{prop} {\rm{(\cite{Vezz})}} Every contact manifold admits a
contact connection. The set of all contact connections on
$(M,\alpha)$ is an affine space modeled on the space of symmetric
covariant $3$-tensors on the contact distribution ${\cal D}$.
\end{prop}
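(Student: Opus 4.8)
The plan is to establish the two assertions separately: first that the difference of any two contact connections is, via $d\alpha$, a symmetric covariant $3$-tensor on ${\cal D}$ (which gives the affine-space structure once nonemptiness is known), and then that a contact connection actually exists. Throughout I use the splitting $TM={\cal D}\oplus{\mathbb R}\xi$ and write $h:TM\to{\cal D}$ for the projection along $\xi$. First I record what the defining relations force. By \eqref{nxi} the Reeb field is parallel and by \eqref{ncd} the distribution ${\cal D}$ is preserved, so $\nabla$ is reducible for the splitting; by \eqref{lie} its $\xi$-directional part is completely determined, $\nabla_{\xi}Y=[\xi,Y]$, which does lie in ${\cal D}$ since ${\cal L}_{\xi}\alpha=0$ gives $\alpha([\xi,Y])=0$. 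Because $\iota_{\xi}\,d\alpha=0$, relations \eqref{lie}--\eqref{nxi} also make the mixed torsion vanish and the $\xi$-component of \eqref{tor} automatic, so the only genuine freedom is the ${\cal D}$-directional part $\nabla_{Y_1}Y_2$, which must be ``torsion-free along ${\cal D}$'', $\nabla_{Y_1}Y_2-\nabla_{Y_2}Y_1=h[Y_1,Y_2]$ by \eqref{tor}, and symplectic, $(\nabla_Y\,d\alpha)(Y_1,Y_2)=0$ by \eqref{nda}. In other words, a contact connection is the same datum as a linear connection on the symplectic vector bundle $({\cal D},\omega)$, $\omega:=d\alpha|{\cal D}$, with prescribed $\xi$-direction and torsion-free, symplectic ${\cal D}$-directions.

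For the affine structure, let $\nabla,\nabla'$ be contact connections and $A:=\nabla'-\nabla$, a $(1,2)$-tensor. Relations \eqref{nxi}, \eqref{ncd}, \eqref{lie} give $A(X,\xi)=0$, $A(X,Y)\in{\cal D}$ for $Y\in{\cal D}$, and $A(\xi,Y)=0$, so $A$ is carried by a bilinear map ${\cal D}\times{\cal D}\to{\cal D}$. Equality of the torsions in \eqref{tor} forces $A(Y_1,Y_2)=A(Y_2,Y_1)$, and subtracting the two instances of \eqref{nda} yields $\omega(A(Y,Y_1),Y_2)+\omega(Y_1,A(Y,Y_2))=0$. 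Setting $\widetilde A(Y,Y_1,Y_2):=\omega(A(Y,Y_1),Y_2)$, symmetry of $A$ makes $\widetilde A$ symmetric in its first two slots, while the last relation makes it symmetric in its last two slots; a $3$-tensor symmetric under both adjacent transpositions is totally symmetric, so $\widetilde A$ is a symmetric covariant $3$-tensor on ${\cal D}$. Since $\omega$ is nondegenerate on ${\cal D}$, the map $A\mapsto\widetilde A$ is a linear isomorphism onto that space, and, conversely, every symmetric $3$-tensor determines through it (extended by zero on $\xi$-directions) a tensor whose addition to a contact connection preserves all five relations. This is exactly the claimed affine-space structure.

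Finally, existence. Starting from any linear connection that preserves the splitting, kills $\xi$, and has $\xi$-directional part $[\xi,\cdot]$ (such connections exist, e.g. by a partition of unity, the contact conditions being affine), I subtract half of its ${\cal D}$-torsion $\tau(Y_1,Y_2)=\nabla_{Y_1}Y_2-\nabla_{Y_2}Y_1-h[Y_1,Y_2]$ to obtain a connection $\bar\nabla$ that is torsion-free along ${\cal D}$ with $\bar\nabla_{\xi}Y=[\xi,Y]$, so its full torsion is $d\alpha\otimes\xi$. It remains to kill $b(Y,Y_1,Y_2):=(\bar\nabla_Y\,d\alpha)(Y_1,Y_2)$ by a symmetric correction, and here lies the main point. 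Because the torsion of $\bar\nabla$ equals $d\alpha\otimes\xi$ and $\iota_{\xi}\,d\alpha=0$, the torsion terms in the identity relating $d(d\alpha)$ to the cyclic sum of $\bar\nabla\,d\alpha$ drop out, so $d(d\alpha)=0$ gives $b(Y,Y_1,Y_2)+b(Y_1,Y_2,Y)+b(Y_2,Y,Y_1)=0$; together with antisymmetry of $b$ in its last pair, this is the precise algebraic input. Defining a symmetric tensor $A$ on ${\cal D}$ by
\[
\omega(A(Y,Y_1),Y_2)=\tfrac13\big[(\bar\nabla_Y\,d\alpha)(Y_1,Y_2)+(\bar\nabla_{Y_1}\,d\alpha)(Y,Y_2)\big],
\]
a direct computation with these two relations shows that $\nabla:=\bar\nabla+A$ satisfies $(\nabla_Y\,d\alpha)(Y_1,Y_2)=0$, while the symmetry, ${\cal D}$-valuedness, and vanishing on $\xi$-directions of $A$ retain \eqref{ncd}, \eqref{lie}, \eqref{nxi}, \eqref{tor}. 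Thus $\nabla$ is a contact connection. I expect the only nonroutine ingredient to be this solvability step, namely that a $3$-tensor antisymmetric in its last pair with vanishing cyclic sum is of the form $(Y,Y_1,Y_2)\mapsto\widetilde A(Y,Y_1,Y_2)-\widetilde A(Y,Y_2,Y_1)$ for a totally symmetric $\widetilde A$; it rests on the nondegeneracy of $\omega$ on ${\cal D}$ and on $d(d\alpha)=0$, and is the exact contact analogue of the existence proof for symplectic connections.
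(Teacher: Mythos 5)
Your argument is correct, and it is essentially the proof the paper relies on: the statement is quoted from Vezzoni, and the paper reproduces the key existence step in Example~2, where a preliminary connection $\nabla'$ is corrected by $\tfrac13\big({\cal N}(X,Y)+{\cal N}(Y,X)\big)$ with $\omega({\cal N}(X,Y),Z)=(\nabla'_X\omega)(Y,Z)$ --- exactly your tensor $A$ --- while your characterization of the difference of two contact connections matches the properties (a)--(d) listed there. The cyclic identity you invoke for $b$ does follow from $d(d\alpha)=0$ because the torsion $d\alpha\otimes\xi$ is annihilated by $\omega$, so the solvability step goes through as you claim.
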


Henceforward $\nabla$ will denote a contact connection on $M$.

\smallskip

It is convenient to set $\omega=d\alpha$. Then
\begin{equation}\label{om-xi}
\omega(\xi,.)=0,
\end{equation}
while $\omega$ is a symplectic form on ${\cal D}$.
\begin{lemma}\label{no}
For every contact connection $\nabla$
$$
\nabla\omega=0.
$$
\end{lemma}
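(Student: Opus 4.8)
The plan is to prove $\nabla\omega=0$ by evaluating the covariant derivative on arbitrary vector fields and splitting each argument along the decomposition $TM=\mathcal D\oplus\mathbb R\xi$. Writing out $(\nabla_X\omega)(Y,Z)=X(\omega(Y,Z))-\omega(\nabla_XY,Z)-\omega(Y,\nabla_XZ)$, I would argue by multilinearity that it suffices to consider the cases where each of $X,Y,Z$ is either the Reeb field $\xi$ or a section of $\mathcal D$, and then handle these cases in turn.

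First I would dispose of the triples in which $Y$ or $Z$ equals $\xi$. Taking $Z=\xi$, the term $X(\omega(Y,\xi))$ vanishes because $\omega(\cdot,\xi)=0$ by (\ref{om-xi}); the term $\omega(Y,\nabla_X\xi)$ vanishes because $\nabla_X\xi=0$ by (\ref{nxi}); and $\omega(\nabla_XY,\xi)$ vanishes again by (\ref{om-xi}). The symmetric argument handles $Y=\xi$. Hence $\nabla\omega$ is entirely determined by its values on triples with $Y,Z\in\Gamma(\mathcal D)$, and it remains to let $X$ range over $\xi$ and over sections of $\mathcal D$.

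For $X\in\Gamma(\mathcal D)$ with $Y,Z\in\Gamma(\mathcal D)$, the key observation is that $\nabla_XY$ and $\nabla_XZ$ are again sections of $\mathcal D$ by (\ref{ncd}); consequently $(\nabla_X\omega)(Y,Z)$ involves $\omega=d\alpha$ only through its restriction to $\mathcal D$, so this case is precisely the defining condition (\ref{nda}) and vanishes. For $X=\xi$ with $Y,Z\in\Gamma(\mathcal D)$, I would substitute $\nabla_\xi Y=[\xi,Y]$ and $\nabla_\xi Z=[\xi,Z]$ from (\ref{lie}), which rewrites $(\nabla_\xi\omega)(Y,Z)$ as $\xi(\omega(Y,Z))-\omega([\xi,Y],Z)-\omega(Y,[\xi,Z])=(\mathcal L_\xi\omega)(Y,Z)$. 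Since $\omega=d\alpha$ is closed and $\iota_\xi\omega=0$ is exactly (\ref{om-xi}), Cartan's formula gives $\mathcal L_\xi\omega=d\,\iota_\xi\omega+\iota_\xi d\omega=0$, completing this case.

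There is no serious obstacle here: the torsion identity (\ref{tor}) is not even needed, and the only point that is not entirely mechanical is recognizing that the $X=\xi$ computation collapses to the Lie derivative $\mathcal L_\xi\omega$ and that the latter vanishes because $\alpha$ is a contact form with Reeb field $\xi$. Assembling the three vanishing cases together with the reductions in the first step yields $(\nabla_X\omega)(Y,Z)=0$ for all $X,Y,Z$, i.e. $\nabla\omega=0$.
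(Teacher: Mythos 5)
Your proof is correct and follows essentially the same route as the paper: the same three cases ($Y$ or $Z$ equal to $\xi$, all arguments in $\mathcal D$, and $X=\xi$ with $Y,Z\in\mathcal D$), handled by (\ref{om-xi}) and (\ref{nxi}), by (\ref{nda}), and by (\ref{lie}) together with closedness of $d\alpha$, respectively. Your use of Cartan's formula for $\mathcal L_\xi\omega$ is just a repackaging of the paper's observation that the $X=\xi$ expression equals $d(d\alpha)(\xi,Y,Z)=0$.
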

\begin{proof}
Let $X,Y,Z$ be vector fields on $M$. If $X,Y,Z$ are sections of
${\cal D}$, then $(\nabla_{X}\omega)(Y,Z)=0$ by (\ref{nda}).
Moreover, in view of (\ref{lie}),
$$
\begin{array}{c}
(\nabla_{\xi}\omega)(Y,Z)=\xi(d\alpha(Y,Z))-d\alpha(\nabla_{\xi}Y,Z)-d\alpha(Y,\nabla_{\xi}Z)\\[6pt]
=\xi(d\alpha(Y,Z))-d\alpha ([\xi,Y],Z)-d\alpha (Y,[\xi,Z])
=d(d\alpha)(\xi,Y,Z)=0.
\end{array}
$$
We also have $(\nabla_{X}\omega)(\xi,Z)=0$ for every $X,Z\in TM$
since $\omega(\xi,.)=0$ and $\nabla_{X}\xi=0$.
\end{proof}

\begin{lemma}\label{Bi}
The curvature tensor $R$ of a contact connection satisfies the
following identities
\begin{itemize}
\item[$(i)$] \quad $\omega(R(X,Y)Z,U)=\omega(R(X,Y)U,Z)$,\quad $X,Y,Z,U\in TM$;
\item[$(ii)$]\quad $R(X,Y)Z+R(Y,Z)X+R(Z,Y)X=0$ \rm{(the Bianchi
identity)}.
\end{itemize}
\end{lemma}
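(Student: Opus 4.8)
The plan is to read off both identities from the defining axioms \eqref{ncd}--\eqref{tor} of a contact connection together with the compatibility $\nabla\omega=0$ established in Lemma \ref{no}, keeping careful track of the sign in the curvature convention $R(X,Y)=\nabla_{[X,Y]}-[\nabla_X,\nabla_Y]$.

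For $(i)$ I would first show that each operator $R(X,Y)$ lies in $sp(\omega)$, i.e. is $\omega$-skew. This is the usual consequence of $\nabla\omega=0$ and needs no torsion-freeness: starting from $X\,\omega(Z,U)=\omega(\nabla_X Z,U)+\omega(Z,\nabla_X U)$, differentiating once more along $Y$, antisymmetrizing in $X,Y$, and using $[X,Y]\,\omega(Z,U)=XY\,\omega(Z,U)-YX\,\omega(Z,U)$, every first-order cross term cancels and one is left with
\[
\omega(R(X,Y)Z,U)+\omega(Z,R(X,Y)U)=0 .
\]
The asserted symmetry is then purely formal: combining the $\omega$-skewness of the operator with the skew-symmetry of $\omega$ itself gives $\omega(R(X,Y)Z,U)=-\omega(Z,R(X,Y)U)=\omega(R(X,Y)U,Z)$. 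This is exactly property $(ii)$ of the symplectic curvature tensor recalled above, now checked directly on all of $TM$ even though $\omega=d\alpha$ is degenerate there.

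For $(ii)$, the first Bianchi identity $R(X,Y)Z+R(Y,Z)X+R(Z,X)Y=0$, the decisive point is that $\nabla$ has torsion: by \eqref{tor}, $T(X,Y):=\nabla_X Y-\nabla_Y X-[X,Y]=\omega(X,Y)\xi$. A direct expansion of the cyclic sum of $R(X,Y)Z$ --- equivalently, the standard first Bianchi identity for a connection with torsion --- expresses it through the terms $(\nabla_X T)(Y,Z)$ and $T(T(X,Y),Z)$, and I would show each of these vanishes. The quadratic term dies at once: $T(T(X,Y),Z)=\omega(X,Y)\,\omega(\xi,Z)\,\xi=0$ since $\omega(\xi,\cdot)=0$ by \eqref{om-xi}. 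For the derivative term, write
\[
(\nabla_X T)(Y,Z)=\big(X\,\omega(Y,Z)\big)\xi+\omega(Y,Z)\nabla_X\xi-\omega(\nabla_X Y,Z)\xi-\omega(Y,\nabla_X Z)\xi ;
\]
here $\nabla_X\xi=0$ by \eqref{nxi}, while $\nabla\omega=0$ gives $X\,\omega(Y,Z)=\omega(\nabla_X Y,Z)+\omega(Y,\nabla_X Z)$, so the whole expression is zero. Hence the cyclic sum vanishes.

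The calculations themselves are short; the one thing to get right is the sign bookkeeping forced by the nonstandard convention for $R$, which is why I would phrase the torsion-corrected Bianchi identity only up to the vanishing of its correction terms rather than pin down each individual sign. The conceptual content is that both corrections disappear for the single reason that the Reeb field $\xi$ is $\nabla$-parallel and lies in the kernel of $\omega=d\alpha$.
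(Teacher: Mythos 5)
Your proof is correct. Part $(i)$ is essentially the paper's own argument: differentiate $\nabla\omega=0$ twice, antisymmetrize to get $\omega(R(X,Y)Z,U)+\omega(Z,R(X,Y)U)=0$, and convert to the stated symmetry using the skew-symmetry of $\omega$. For part $(ii)$ you take a genuinely different route. The paper first reduces to two cases (using $R(\cdot,\cdot)\xi=0$ and the fact that $\nabla$ preserves $\mathcal D$): for $X,Y,Z\in\mathcal D$ it cites Vezzoni's Lemma~2.6, and for the mixed case $Y=\xi$ it expands the cyclic sum by hand, using \eqref{lie}, \eqref{tor} and the Jacobi identity to show the sum equals $-\xi(\alpha([X,Z]))\xi$, which must vanish because the left-hand side lies in $\mathcal D$. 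You instead invoke the first Bianchi identity for a connection with torsion, observe that \eqref{tor} says $T(X,Y)=\omega(X,Y)\xi$, and kill both correction terms at once: $T(T(X,Y),Z)=0$ because $\omega(\xi,\cdot)=0$ by \eqref{om-xi}, and $(\nabla_X T)(Y,Z)=0$ because $\nabla\xi=0$ by \eqref{nxi} and $\nabla\omega=0$ by Lemma~\ref{no}, i.e.\ the torsion is $\nabla$-parallel. Your argument is uniform (no case split), self-contained (no appeal to \cite{Vezz}), and isolates the conceptual reason the identity holds; its only cost is that it presupposes the torsion-corrected Bianchi identity, whose sign conventions you rightly flag but legitimately sidestep since both correction terms vanish identically. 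One small point worth noting either way: the cyclic sum in the statement as printed contains a typo ($R(Z,Y)X$ should read $R(Z,X)Y$), and your reading of it as the genuine cyclic sum is the intended one.
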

\begin{proof}
Since $\nabla\omega=0$, we have
$$
\begin{array}{c}
\omega(\nabla_{X}\nabla_{Y}Z,U)=XY(\omega(Z,U))-X(\omega(Z,\nabla_{Y}U))
-Y(\omega(Z,\nabla_{X}U))\\[6pt]+\omega(Z,\nabla_{Y}\nabla_{X}U)\\[6pt]
\mbox{ and }\\[6pt]
\omega(\nabla_{[X,Y]}Z,U)=[X,Y](\omega(Z,U))-\omega(Z,\nabla_{[X,Y]}U).
\end{array}
$$
It follows that $\omega(R(X,Y)Z,U)=-\omega(Z,R(X,Y)U)$. This proves
$(i)$.

To prove $(ii)$ we first note that $R(\cdot\,,\cdot)\xi=0$ since
$\nabla\xi=0$, and that if $Z\in{\cal D}$, then
$R(\cdot\,,\cdot)Z\in{\cal D}$  since $\nabla$ preserves the
bundle ${\cal D}$. Thus, to show the Binachi identity, it is
enough to consider the cases when $X,Y,Z\in{\cal D}$ and
$X,Z\in{\cal D}, Y=\xi$. In the first case the Bianchi identity
has been proved in \cite[Lemma 2.6]{Vezz}. In the second case, we
have
$$
\begin{array}{c}
R(X,\xi)Z+R(\xi,Z)X=-\nabla_{X}\nabla_{\xi}Z+\nabla_{\xi}(\nabla_{X}Z-\nabla_{Z}X)
+\nabla_{Z}\nabla_{\xi}X\\[6pt]
\hspace{4cm}+\nabla_{[X,\xi]}Z+\nabla_{[\xi,Z]}X\\[6pt]
=-\nabla_{X}[\xi,Z]+\nabla_{\xi}[X,Z]_{\cal
D}-\nabla_{Z}[X,\xi]+\nabla_{[X,\xi]}Z+\nabla_{[\xi,Z]}X-\nabla_{[X,Z]}\xi\\[6pt]
=(-[X,[\xi,Z]]+[\xi,[X,Z]]-[Z,[X,\xi]])_{\cal
D}-\xi(\alpha([X,Z]))\xi\\[6pt]
=-\xi(\alpha([X,Z]))\xi.
\end{array}
$$
where the subscript ${\cal D}$ means "the projection to ${\cal D}$".
It follows that $R(X,\xi)Z+R(\xi,Z)X=0$ since the left-hand side of
the identity above lies in ${\cal D}$.
\end{proof}

\section{Almost contact metric structures on contact twistor spaces}

Let $M$ be a contact manifold with contact form $\alpha$, contact
distribution ${\cal D}$ and Reeb field $\xi$, $dim\,M=2n+1$. Set
$\omega=d\alpha$ as above.

Following \cite{Vezz} we define the contact twistor space of
$(M,\alpha)$ as the bundle ${\cal C}\to M$ whose fibre at every
point $p\in M$ is ${\cal Z}({\cal D}_p,\omega_p)$, the space of
complex structures on the vector space ${\cal D}_p$ compatible with
the symplectic form $\omega_p|{\cal D}_p=d\alpha|{\cal D}_p$.

The total space ${\cal C}$ is a submanifold of $End({\cal D})$. We
imbed $End({\cal D})$ into $End(TM)$ setting $A\xi=0$ for every
$A\in End({\cal D})$, and shall consider ${\cal C}\to M$ as a
subbundle of the bundle $\pi:End(TM)\to M$.

\smallskip

\noindent {\it Remark 1}. According to this convention, every
$J\in{\cal C}$ will be considered as an endomorphism of
$T_{\pi(J)}M$ such that
\begin{equation}\label{J2}
J^2X=-X+\alpha(X)\xi,\quad X\in T_{\pi(J)}M,
\end{equation}
\begin{equation}\label{om-J}
\omega(JX,JY)=\omega(X,Y), \quad X,Y\in T_{\pi(J)}M,
\end{equation}
\begin{equation}
\omega(Z,JZ)>0~\mbox{ for }~ Z\in {\cal D}_{\pi(J)},\, Z\neq 0.
\end{equation}

\smallskip

Suppose we a given a contact connection $\nabla$ on $TM$.

\smallskip

The connection $\nabla$  induces a connection on the vector
bundle $End(TM)$ which will also  be denoted by $\nabla$.

\smallskip

\noindent {\it Remark 2}. Let $S$ be a section of the bundle ${\cal
C}\to M$. Denote the extension of $S$ to a section of $End(TM)$ by
$\bar S$, so $\bar S_p|{\cal D}_p=S_p$, $\bar S_p(\xi_p)=0$, $p\in
M$. Then $(\nabla_{X}\bar S)(Z)=(\nabla_{X}S)(Z)$ for $X\in T_pM$,
$Z\in {\cal D}_p$ since $\nabla$ preserves the bundle ${\cal D}$.
Also $(\nabla_{X}\bar S)(\xi)=0$ since $\nabla\xi=0$. Thus, the
extension of $\nabla_{X}S$ is $\nabla_{X}\bar S$.

\smallskip

Let ${\cal H}$ be the horizontal subbundle of $TEnd(TM)$ defined by
means of the connection $\nabla$ on $End(TM)$.

\medskip

\noindent {\bf Notation}.  Let $J\in {\cal C}$ and $p=\pi(J)$.
Take a basis $e_1,\dots,e_n, e_{n+1}=Je_1,\dots,e_{2n}=Je_n$ of
${\cal D}_p=Im\,J$ that is orthonormal with respect to the metric
$g_J(u,v)=\omega(u,Jv)$ on $D_p$. For this basis
$\omega(e_i,e_j)=\omega(e_{i+n},e_{j+n})=0$, $\omega
(e_i,e_{j+n})=\delta_{ij}$, $i,j=1,\dots.,n$. Since by
Lemma~\ref{no} $\omega$ is $\nabla$-parallel, there exists a frame
of vector fields $E_1,\dots,E_{2n}$ in a (geodesically convex)
neighbourhood of $p$ such that
$$
\begin{array}{c}
E_{r}(p)=e_{r},\quad \nabla E_{r}|_p=0,\quad
r=1,\dots,2n,\\[6pt]
\omega(E_i,E_j)=\omega(E_{i+n},E_{j+n})=0,\quad \omega
(E_i,E_{j+n})=\delta_{ij},\quad i,j=1,\dots.,n.
\end{array}
$$

Define a section $S$ of $End(TM)$ by
$$
SE_i=E_{i+n},\quad SE_{i+n}=-E_{i},\quad i=1,\dots,n,\quad S\xi=0.
$$
Then $S$ is a section of ${\cal C}$ such that
$$
S(p)=J,\quad \nabla S|_p=0.
$$

\smallskip

It follows that, for every $J\in{\cal C}$ and $X\in T_{\pi(J)}M$,
the horizontal lift $X^h_J=S_{\ast}(X)\in{\cal H}_J$ of $X$ lies in
$T_J{\cal C}$, i.e. the horizontal spaces ${\cal H}_J$, $J\in{\cal
C}$, are tangent to the manifold ${\cal C}$. Thus, if ${\cal
V}_J=Ker\,(\pi|{\cal C})_{\ast}$ is the vertical space of the bundle
${\cal C}\to M$, we have the direct sum decomposition
$$
T_J{\cal C}={\cal V}_J\oplus{\cal H}_J.
$$

\smallskip

 Let $(U,x_1,\dots,x_{2n+1})$ be a local coordinate system of $M$.
Define a frame $L_{\alpha\beta}$ of $End(TM)$ setting
$L_{\alpha\beta}E_{\gamma}=\delta_{\alpha\gamma}E_{\beta}$,
$1\leq\alpha,\beta,\gamma\leq 2n+1$. If $L\in \pi^{-1}(U)\subset
End(TM)$, we have
$$L=\sum_{\beta,\gamma=1}^{2n+1}y^{\beta\gamma}L_{\beta\gamma}$$
for  some smooth functions $y^{\beta\gamma}$. Set $\widetilde
x^{\alpha}(L)=x_{\alpha}\circ\pi (L)$. Then $(\widetilde
x^{\alpha},y^{\beta\gamma})$ is a local coordinate system of the
manifold $End(TM)$.

\smallskip

Let $[\theta_{\alpha\beta}^{\mu\nu}]$ be the connection matrix of
$\nabla$ with respect to the frame $L_{\alpha\beta}$:
$$
\nabla_X
L_{\alpha\beta}=\sum_{\mu,\nu=1}^{2n+1}\theta_{\alpha\beta}^{\mu\nu}(X)L_{\mu\nu},\quad
X\in TM.
$$
Then, for every vector field
$$
X=\sum_{\alpha=1}^{2n+1} X^{\alpha}\frac{\partial}{\partial
x_{\alpha}}
$$
on $U$, the horizontal lift $X^h$ on $\pi^{-1}(U)$ is given by
\begin{equation}\label{h-lift}
X^h=\sum_{\alpha=1}^{2n+1}(X^{\alpha}\circ\pi)\frac{\partial}{\partial
\widetilde
x^{\alpha}}-\sum_{\beta,\gamma,\mu,\nu=1}^{2n+1}y^{\beta\gamma}(\theta_{\beta\gamma}^{\mu\nu}(X)\circ\pi)
\frac{\partial}{\partial y^{\mu\nu}}.
\end{equation}

\medskip

Let $L\in End(TM)$ and $p=\pi(L)$. Then (\ref{h-lift}) implies
that under the standard identification $T_{L}End(T_pM)\cong
End(T_pM)$ we have
\begin{equation}\label{h-bra}
[X^h,Y^h]_L=[X,Y]^h_L+R(X,Y)L,
\end{equation}
where $R(X,Y)L$ is the curvature of the connection $\nabla$ on
$End(TM)$.

\smallskip

\noindent {\it Remark 3}. Note that, for $J\in{\cal C}$, the
isomorphism $T_{J}End(T_{\pi(J)}M)\cong End(T_{\pi(J)}M)$ identifies
the vertical space ${\cal V}_J$ of the bundle ${\cal C}\to M$ with
the space of endomorphisms $U$ of $T_{\pi(J)}M$ such that $U\xi=0$,
$JU+UJ=0$, $\omega(UX,Y)+\omega(X,UY)=0$, $X,Y\in T_{\pi(J)}M$.

\smallskip

\noindent {\it Remark 4}. Given $J\in{\cal C}$, denote for a moment
the extension of the endomorphism $J$ of ${\cal D}_{p}$, $p=\pi(J)$,
to an endomorphism of $T_{p}M$ by $\bar J$ ($\bar J|{\cal D}_{p}=J$,
$\bar J\xi=0$). Then, for $X,Y\in T_pM$, $R(X,Y)\bar J$ is the
extension of the endomorphism $R(X,Y)J$ of ${\cal D}_p$ since
$\nabla$ preserves ${\cal D}$ and $\nabla\xi=0$.

\smallskip

Remarks 1-4 show that the imbedding $End({\cal D})\hookrightarrow
End(TM)$ has nice properties in the context of our considerations.

\smallskip

As usual in twistor theory, we can define two endomorphisms
$\Phi_k$ of $T{\cal C}$ setting
$$
\Phi_kX^h_J=(JX)^h_J \hskip0.2cm \mbox{for}\hskip0.2cm X\in
T_{\pi(J)}M,\quad \Phi_kV=(-1)^{k+1}JV \hskip0.2cm
\mbox{for}\hskip0.2cm V\in{\cal V}_J.
$$

Clearly, $\Phi_k^3+\Phi_k=0$, $rank\,\Phi_k=2n$. Recall that an
endomorphism of the tangent bundle of a manifold with these
properties is called a partially complex structure or a
$f$-structure. Note also that $\Phi_k(\xi^h)=0$.

\smallskip

The fibre of the subbundle $Im\,\Phi_k$ of $T{\cal C}$ at a point
$J\in{\cal C}$ is the space  ${\cal V}_J\oplus \{X^h_J:~X\in{\cal
D}_{\pi(J)}\}$. Set ${\cal E}=Im\,\Phi_1 (=Im\,\Phi_2)$. Then
$({\cal E},\Phi_k|{\cal E})$ is an almost $CR$-structure on ${\cal
C}$. For $k=1$, the integrability condition for this structure has
been obtained in \cite{Vezz}.

\smallskip

For every $t>0$, we  define a Riemannian metric $G_t$ on ${\cal C}$
as follows: Let $J\in{\cal C}$ and $p=\pi(J)$. For $X,Y\in {\cal
D}_p$, we set $G_t(X^h_J,Y^h_J)=\omega(X,JY)$ and
$G_t(X^h_J,\xi^h_J)=0$. Thus
$$
G_t(X^h_J,Y^h_J)=\omega(X,JY)+\alpha(X)\alpha(Y)~~~\mbox {for
every}~~~ X,Y\in T_pM.
$$
On the vertical subspace ${\cal V}_J$ of $T_J{\cal C}$, we set
$G|{\cal V}_J=tG_J$, $t$-times the metric on the fibre through $J$.
Finally, the horizontal and vertical spaces at $J$ are declared to
be orthogonal. Then $(\Phi_k,\xi^h,G_t)$ is an almost contact metric
structure on ${\cal C}$. We refer to \cite{Blair} for general facts
about (almost)  contact metric structures.

The main purpose of this section is to find conditions on $M$
under which $(\Phi_k,\xi^h,G_t)$ is a normal structure. Recall
that any almost contact metric structure $(\varphi,\xi,g)$ on a
manifold $N$ induces an almost complex structure $K$ on the
manifold $N\times S^1$ for which $KX=\varphi X$  for $X\in TN$,
$X\perp \xi$, $K\xi=-\displaystyle{\frac{\partial}{\partial s}}\in
TS^1$ where $s$ is the local coordinate $\displaystyle{e^{2\pi
is}\to s}$ on $S^1$. The structure $(\varphi,\xi,g)$ is said to be
normal if the induced almost complex structure on $N\times S^1$ is
integrable. It is well-known that $(\varphi,\xi,g)$ is a normal
structure if and only if the tensor $N^{(1)}(X,Y)=\varphi
^{2}[X,Y]+ [\varphi X,\varphi Y]-\varphi [\varphi X,Y] -\varphi
[X,\varphi Y] +d\eta(X,Y)\xi$ vanishes, where $\eta(X)=g(X,\xi)$
(see, for example, \cite{Blair}; the definition of $d\eta$ used
here is twice the one in \cite{Blair}). For the almost contact
structure $(\Phi_k,\xi^h,G_t)$ this tensor  will be denoted by
$N^{(1)}_{k}$.

\smallskip

Let $A$ be a (local) section of $End(TM)$ with $A\xi=0$ (i.e. a
section of $End({\cal D}$)). Define a section $A^{\ast}$ of
$End({\cal D})$ by $\omega(A^{\ast}X,Y)=\omega(X,AY)$, $X,Y\in TM$,
and consider it as a section of $End(TM)$ ($A^{\ast}\xi=0$). Then
$$\check{A}=\frac{1}{2}(A-A^{\ast})$$ is an $\omega$-skew-symmetric
section of $End(TM)$, and we can define a vertical vector field
$\widetilde A$ on ${\cal C}$ setting
\begin{equation}\label{Atilde}
\widetilde
A_J=\frac{1}{2}(\check{A}_{\pi(J)}+J\circ\check{A}_{\pi(J)}\circ J).
\end{equation}

\begin{lemma}\label{h-v} If $J\in{\cal C}$ and $X$ is a vector field near the point
$p=\pi(J)$, then
\begin{itemize}
\item[$(i)$]~~ $[X^h,\widetilde A]_J=(\widetilde{\nabla_X
A})_J$\ ,\\[6pt]
\item[$(ii)$]~~$[X^h,\Phi_k\widetilde A]_J=\Phi_k(\widetilde{\nabla_X A})_J,\quad
k=1,2$\ ,\\[6pt]
\item[$(iii)$]~~$[\Phi_kX^h,\widetilde A]_J=(\widetilde{\nabla_{JX} A})_J-(\widetilde
A_JX)_J^h$\ ,\\[6pt]
\item[$(iv)$]~~$[\Phi_kX^h,\Phi_k\widetilde A]_J=\Phi_k(\widetilde{\nabla_{JX}
A})_J-(\Phi_k(\widetilde
A_J)X)_J^h\ .$\\[6pt]
\end{itemize}
\end{lemma}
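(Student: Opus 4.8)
The plan is to compute each bracket directly in the local coordinates introduced before the lemma, using the horizontal-lift formula \eqref{h-lift} together with the bracket identity \eqref{h-bra}, and to exploit the adapted frame $E_1,\dots,E_{2n}$ with $\nabla E_r|_p=0$ and the section $S$ with $S(p)=J$, $\nabla S|_p=0$. The key structural fact I would rely on is that a vertical vector field $\widetilde A$ arises from a section $A$ of $End({\cal D})$ via \eqref{Atilde}, so near $p$ we may regard $\widetilde A$ as a function into the fibres of ${\cal C}\subset End(TM)$; its derivative in a horizontal direction is governed by $\nabla A$ precisely because the horizontal lift is defined by the connection.

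\textbf{Proof of part $(i)$.} For the first identity I would evaluate $[X^h,\widetilde A]_J$ at $J$ by noting that $X^h$ differentiates $\widetilde A$ along the horizontal distribution, while $\widetilde A$, being vertical, contributes no horizontal component to offset it. Using the coordinate expression \eqref{h-lift} for $X^h$ and the definition \eqref{Atilde} of $\widetilde A$, the bracket reduces at $p$ (where $\nabla S|_p=0$ and $\nabla E_r|_p=0$) to the horizontal covariant derivative of the endomorphism-valued function $J\mapsto\frac{1}{2}(\check A+J\check A J)$. Since the frame is $\nabla$-parallel at $p$, differentiating this expression along $X^h$ produces $\frac{1}{2}(\check{\nabla_X A}+J\,\check{\nabla_X A}\,J)$, which is by definition $(\widetilde{\nabla_X A})_J$. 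The content of the computation is that the curvature term $R(X,Y)L$ in \eqref{h-bra} does not appear here because the second slot is vertical rather than a horizontal lift.

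\textbf{Parts $(ii)$–$(iv)$.} For $(ii)$ I would apply $\Phi_k$-compatibility: since $\Phi_k\widetilde A$ differs from $\widetilde A$ only by the fibrewise action of $J$, and $\Phi_k$ is built from the same $J$ that defines the vertical structure, the bracket $[X^h,\Phi_k\widetilde A]_J$ is the image under $\Phi_k$ of the result in $(i)$, after checking that the derivative of the factor $J$ along $X^h$ is absorbed by the projection to $T_J{\cal C}$. Part $(iii)$ is where the extra term $-(\widetilde A_J X)^h_J$ enters: here the first argument is $\Phi_k X^h=(JX)^h$ along the fibre, so differentiating $\widetilde A$ produces $(\widetilde{\nabla_{JX}A})_J$, but now the vertical field $\widetilde A$ also acts back on the base direction, and the non-vanishing of $[\Phi_k X^h,\widetilde A]$ along the horizontal distribution yields the horizontal correction $-(\widetilde A_J X)^h_J$, which I would extract by tracking how the fibre variable $y^{\mu\nu}$ couples to the base variable in \eqref{h-lift}. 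Part $(iv)$ follows by combining the mechanisms of $(ii)$ and $(iii)$, applying $\Phi_k$ to the $\nabla_{JX}$ term and correcting the horizontal lift by $\Phi_k\widetilde A_J$ acting on $X$.

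\textbf{Main obstacle.} The routine but delicate point is the bookkeeping in $(iii)$ and $(iv)$: one must carefully separate the purely vertical variation of $\widetilde A$ from the horizontal component that the commutator generates, and verify that the projection $pr_J$ onto $T_J{\cal C}$ does not alter the claimed identities. I expect the hard part to be confirming that the horizontal correction term is exactly $(\widetilde A_J X)^h_J$ with the correct coefficient, which amounts to showing that the derivative of the vertical field in the $(JX)$-direction reproduces the action of $\widetilde A_J$ on the base vector $X$; this is where the explicit coordinate form \eqref{h-lift} and the vanishing of $\nabla S$ and $\nabla E_r$ at $p$ must be used in full.
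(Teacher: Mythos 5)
Your proposal is correct and follows essentially the same route as the paper: a direct computation in the adapted coordinates $(\widetilde x^{\alpha},y^{\beta\gamma})$ using the horizontal-lift formula, the vanishing of the connection forms at $p$, and the observation that the correction term in $(iii)$--$(iv)$ comes from the vertical field differentiating the fibre-coordinate factor of $\Phi_kX^h$. The only step you leave implicit is the identity $(\nabla_XA)^{\ast}=\nabla_XA^{\ast}$ (a consequence of $\nabla\omega=0$), which is needed to recognize the derivative of $\check A$ as $(\nabla_XA)\check{\ }$ and hence the result as $(\widetilde{\nabla_XA})_J$.
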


\begin{proof}
Note first that by (\ref{h-lift})
\begin{equation}\label{aux}
[X^h,\frac{\partial}{\partial y^{\beta\gamma}}]_J=0,\quad
X^h_J=\sum_{\alpha=1}^{2n+1}X^{\alpha}(p)(\frac{\partial}{\partial
\widetilde x^{\alpha}})_J.
\end{equation}
Let $AE_{\alpha}=\sum_{\beta=1}^{2n+1}a^{\alpha\beta}E_{\beta}$,
$A^{\ast}E_{\alpha}=\sum_{\beta=1}^{2n+1}a^{\ast\,\alpha\beta}E_{\beta}$,
$\check{A}E_{\alpha}=\sum_{\beta=1}^{2n+1}{\check
a}^{\alpha\beta}E_{\beta}$. Then
\begin{equation}\label{Atil}
\widetilde A=\sum_{\alpha,\beta=1}^{2n+1}\widetilde
a^{\alpha\beta}\frac{\partial}{\partial y^{\alpha\beta}},
\end{equation}
where
\begin{equation}\label{tilde a}
\widetilde a^{\alpha\beta}=\frac{1}{2}[{\check
a}^{\alpha\beta}\circ\pi+\sum_{\mu,\nu=1}^{2n+1}y^{\alpha\mu}({\check
a}^{\mu\nu}\circ\pi)y^{\nu\beta}],\quad {\check
a}^{\alpha\beta}=\frac{1}{2}(a^{\alpha\beta}-a^{\ast\,\alpha\beta}).
\end{equation}
In view of (\ref{aux}), it follows  that
\begin{equation}\label{br}
[X^h,\widetilde
A]_J=\frac{1}{2}\sum_{\alpha,\beta=1}^{2n+1}\{X_p({\check
a}^{\alpha\beta})+\sum_{\mu,\nu=1}^{2n+1}y^{\alpha\mu}(J)X_p({\check
a}^{\mu\nu})y^{\nu\beta}(J)\}(\frac{\partial}{\partial
y^{\alpha\beta}})_J.
\end{equation}
On the other hand, we have
$(\nabla_{X_p}\check{A})(E_{\alpha})=\sum_{\beta=1}^{2n+1}X_p(\check{a}^{\alpha\beta})(E_{\beta})_p$.
Note also that
$$\omega((\nabla_{X}A^{\ast})(Y),Z)=\omega(Y,(\nabla_{X}A)(Z))$$
for every $Y,Z\in{\cal D}$ where $(\nabla_{X}A^{\ast})(Y)\in{\cal
D}_p$. Moreover, $(\nabla_{X}A^{\ast})(\xi)=0$. Thus
$(\nabla_{X}A)^{\ast}=\nabla_{X}A^{\ast}$, hence
$(\nabla_{X}A){\check{}}=\nabla_{X}\check{A}$. It follows that
$$
(\widetilde{\nabla_X
A})_J=\frac{1}{2}(\nabla_{X_p}\check{A}-J\circ\nabla_{X_p}\check{A}\circ
J).
$$
Therefore the right-hand side of (\ref{br}) equals
$(\widetilde{\nabla_X A})_J$.

The second formula of the lemma can be proved by similar
computations taking into account that
\begin{equation}\label{PhiA}
\Phi_k\widetilde
A=(-1)^{k+1}\sum_{\alpha,\beta,\gamma=1}^{2n+1}y^{\alpha\beta}\widetilde
a^{\beta\gamma}\frac{\partial}{\partial y^{\alpha\gamma}}.
\end{equation}

Set $X=\sum_{\alpha=1}^{2n+1}f^{\alpha}E_{\alpha}$.  Then
\begin{equation}\label{PhiXh}
\Phi_kX^h=\sum_{\alpha,\beta=1}^{2n+1}(f^{\alpha}\circ\pi)y^{\alpha\beta}E_{\beta}^h.
\end{equation}
This and the first formula of the lemma imply  $(iii)$.

Formula $(iv)$ follows from $(ii)$ and (\ref{PhiXh}).

\end{proof}

\begin{lemma}\label{curv}
For every two vector fields $X,Y$ near the point $p=\pi(J)$ and
every two integers $a,b\geq 0$, we have
$$
[\Phi_k^aX^h,\Phi_k^bY^h]_J=[S^aX,S^bY]^h_J+R_p(J^aX,J^bY)J,\quad
k=1,2.
$$
\end{lemma}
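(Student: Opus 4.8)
The plan is to reduce everything to the base formula (\ref{h-bra}) by writing the fields $\Phi_k^aX^h$ and $\Phi_k^bY^h$ explicitly in the coordinates of $End(TM)$. The first point to record is that $\Phi_k$ acts on horizontal lifts independently of $k$: since $\Phi_kX^h_J=(JX)^h_J$, iteration gives $\Phi_k^aX^h_J=(J^aX)^h_J$, so the cases $k=1,2$ can be treated simultaneously and only the horizontal action of $\Phi_k$ is ever used. Writing $X=\sum f^\alpha E_\alpha$, formula (\ref{PhiXh}) specializes (take $X=E_\beta$) to $\Phi_kE_\beta^h=\sum_\gamma y^{\beta\gamma}E_\gamma^h$, and because $\Phi_k$ is tensorial this iterates to
$$
\Phi_k^aX^h=\sum_\gamma p_a^\gamma\,E_\gamma^h,\qquad p_a^\gamma=\sum_\alpha(f^\alpha\circ\pi)(y^a)^{\alpha\gamma},
$$
where $(y^a)^{\alpha\gamma}$ is the $(\alpha,\gamma)$-entry of the $a$-th matrix power of $(y^{\alpha\beta})$; note $p_a^\gamma(J)=(J^aX)^\gamma$ in the frame $E_\gamma$. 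Similarly $\Phi_k^bY^h=\sum_\delta q_b^\delta E_\delta^h$ for $Y=\sum h^\mu E_\mu$. These expressions are horizontal, hence tangent to ${\cal C}$ along ${\cal C}$, so their ambient bracket computes the intrinsic one at $J$.

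Next I would expand $[\Phi_k^aX^h,\Phi_k^bY^h]$ by the Leibniz rule and insert (\ref{h-bra}) in the form $[E_\gamma^h,E_\delta^h]_J=[E_\gamma,E_\delta]_J^h+R_p(E_\gamma,E_\delta)J$. The curvature contribution collects at once into
$$
\sum_{\gamma,\delta}p_a^\gamma(J)\,q_b^\delta(J)\,R_p(E_\gamma,E_\delta)J=R_p(J^aX,J^bY)J,
$$
which is the desired curvature term. What remains is the sum of the derivative terms $\sum_{\gamma,\delta}\big[p_a^\gamma E_\gamma^h(q_b^\delta)-q_b^\delta E_\delta^h(p_a^\gamma)\big]_J E_\bullet^h$ together with $\sum_{\gamma,\delta}p_a^\gamma(J)q_b^\delta(J)[E_\gamma,E_\delta]_J^h$, and the task is to recognize this as $[S^aX,S^bY]_J^h$.

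The heart of the argument — and the step needing care — is the evaluation of the horizontal derivatives $E_\gamma^h(q_b^\delta)$ at $J$ and their matching with the base bracket. Here the chosen frame is decisive: since $\nabla E_\alpha|_p=0$ (equivalently $\nabla S|_p=0$), all connection coefficients $\theta^{\mu\nu}_{\alpha\beta}$ vanish at $p$, so by (\ref{h-lift}) the field $E_\gamma^h$ annihilates every function of the fibre coordinates $y^{\alpha\beta}$ at $J$; consequently $E_\gamma^h(q_b^\delta)|_J=\sum_\mu E_\gamma(h^\mu)(p)\,(J^b)^{\mu\delta}$. On the base side, writing $(S^bY)^\delta=\sum_\mu h^\mu(s^b)^{\mu\delta}$ with $s^{\mu\delta}=y^{\mu\delta}\circ S$, and using $S_\ast E_\gamma|_p=(E_\gamma)^h_J$ together with the same vanishing, one gets $E_\gamma\big((S^bY)^\delta\big)|_p=\sum_\mu E_\gamma(h^\mu)(p)\,(J^b)^{\mu\delta}$ — the identical expression. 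Expanding $[S^aX,S^bY]_p$ in the frame $E_\gamma$ and lifting horizontally then reproduces exactly the remaining terms, so the two sides agree. I expect the main obstacle to be precisely the bookkeeping that forces these two frame-component computations — one carried out on ${\cal C}$, one on $M$ — to coincide; both collapse to the single fact that the connection coefficients vanish at $p$, which is what kills the would-be vertical cross terms and leaves a purely horizontal remainder. Since no vertical action of $\Phi_k$ entered anywhere, the identity holds verbatim for $k=1$ and $k=2$.
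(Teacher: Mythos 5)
Your argument is correct and is essentially the paper's own proof written out in the fibre coordinates: the paper compresses your entire bookkeeping into the identities $(\Phi_k^aX^h)_J=S_{\ast\,p}(J^aX_p)$ and $\Phi_k^aX^h\circ S=(S^aX)^h\circ S$ for the adapted section $S$ with $\nabla S|_p=0$, and then applies (\ref{h-bra}) exactly as you do. The vanishing of the connection coefficients at $p$ (equivalently $\nabla S|_p=0$), which you correctly identify as the crux, is precisely what makes those two identities deliver the bracket formula in the paper's version as well.
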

\begin{proof}
This follows from the identities
$(\Phi_k^aX)^h_J=S_{\ast\,p}(J^aX_p)$, $\Phi^a_kX^h\circ
S=(S^aX)^h\circ S$ and formula (\ref{h-bra}).
\end{proof}

Denote by $D=D_t$ the Levi-Civita connection of the metric $G_t$.

\begin{lemma}\label{Dhh}If $X,Y,Z$ are vector
fields on a neighbourhood of the point $p=\pi(J)$, then
$$
\begin{array}{c}
G_t(D_{X^h}Y^h,Z^h)_J=G_t((\nabla_{X}Y)^h,Z^h)_J+[X_p(\alpha(Y))-\alpha_p(\nabla_XY)]\alpha_p(Z)\\[6pt]
+\displaystyle{\frac{1}{2}}[\alpha_p(X)\omega_p(Y,Z)+\alpha_p(Y)\omega_p(X,Z)-\alpha_p(Z)\omega_p(X,Y)]
\end{array}
$$
\end{lemma}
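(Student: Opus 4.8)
The plan is to apply the Koszul formula for $D=D_t$ to the three horizontal lifts $X^h,Y^h,Z^h$, namely
\begin{align*}
2G_t(D_{X^h}Y^h,Z^h) &= X^h G_t(Y^h,Z^h) + Y^h G_t(X^h,Z^h) - Z^h G_t(X^h,Y^h) \\
&\quad + G_t([X^h,Y^h],Z^h) - G_t([Y^h,Z^h],X^h) - G_t([X^h,Z^h],Y^h),
\end{align*}
and to evaluate each differentiation term and each bracket term at $J$. Throughout I would use that on horizontal vectors $G_t(U^h,W^h)=\omega(U,JW)+\alpha(U)\alpha(W)$, that this form is symmetric in $U,W$ because $J$ is $\omega$-compatible, and that $\omega(\xi,\cdot)=0$ by (\ref{om-xi}).

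For the three differentiation terms the crucial device is the section $S$ of ${\cal C}$ constructed before the statement, which satisfies $S(p)=J$ and $\nabla S|_p=0$, so that $X^h_J=S_{\ast}X_p$. Viewing $G_t(Y^h,Z^h)$ as the function $q\mapsto \omega_q(Y_q,S(q)Z_q)+\alpha(Y)\alpha(Z)|_q$ near $p$ and differentiating it along $X_p$, I note that $\omega(Y,SZ)$ is a scalar, so $X_p$ of it equals its covariant derivative; expanding by the Leibniz rule and using $\nabla\omega=0$ (Lemma~\ref{no}) together with $\nabla S|_p=0$ gives
\[
X^h_J G_t(Y^h,Z^h) = \omega_p(\nabla_XY,JZ)+\omega_p(Y,J\nabla_XZ) + X_p(\alpha(Y))\alpha_p(Z)+\alpha_p(Y)X_p(\alpha(Z)),
\]
and likewise for the other two terms by permuting $X,Y,Z$.

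For the three bracket terms I would invoke (\ref{h-bra}) (equivalently Lemma~\ref{curv} with $a=b=0$): $[X^h,Y^h]_J=[X,Y]^h_J+R(X,Y)J$. Since $R(X,Y)J$ is the curvature endomorphism, it is tangent to the fibre, hence vertical (cf. Remark 3); as horizontal and vertical spaces are $G_t$-orthogonal, its pairing with $Z^h$ drops out, leaving $G_t([X^h,Y^h],Z^h)_J=\omega([X,Y],JZ)+\alpha([X,Y])\alpha_p(Z)$. I then replace every bracket by the torsion identity (\ref{tor}), $[X,Y]=\nabla_XY-\nabla_YX-\omega(X,Y)\xi$; the $\xi$-part contributes nothing to the $\omega(\cdot,J\cdot)$ piece by (\ref{om-xi}) and contributes the scalar $-\omega(X,Y)$ to $\alpha([X,Y])$.

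Finally I would collect terms. Using the symmetry of $\omega(\cdot,J\cdot)$, the six $\omega$-contributions cancel cyclically and leave exactly $2\omega_p(\nabla_XY,JZ)$. For the $\alpha$-contributions the key point is that $\nabla\alpha$ is symmetric; in fact $\nabla\alpha=0$, since $\nabla$ preserves ${\cal D}$ by (\ref{ncd}) and $\nabla\xi=0$ by (\ref{nxi}) force $(\nabla_X\alpha)(Y)=0$ on both ${\cal D}$ and $\mathbb R\xi$. This symmetry makes the derivative-type terms (carrying $X_p(\alpha(\cdot))$) combine with the bracket-type terms (carrying $\alpha(\nabla\cdot)$ and $-\omega$) into
\[
2X_p(\alpha(Y))\alpha_p(Z)+\alpha_p(X)\omega_p(Y,Z)+\alpha_p(Y)\omega_p(X,Z)-\alpha_p(Z)\omega_p(X,Y).
\]
Dividing by $2$ and writing $X_p(\alpha(Y))=\alpha_p(\nabla_XY)+[X_p(\alpha(Y))-\alpha_p(\nabla_XY)]$, so as to absorb $\alpha_p(\nabla_XY)\alpha_p(Z)$ together with $\omega_p(\nabla_XY,JZ)$ into $G_t((\nabla_XY)^h,Z^h)_J$, recovers the stated formula, the retained middle term being $(\nabla_X\alpha)(Y)\alpha_p(Z)$. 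I expect the main obstacle to be the careful differentiation of the $J$-dependent metric in the first step and the bookkeeping of the many $\alpha$- and $\omega$-terms; once $\nabla\alpha=0$ and the cyclic symmetry are in hand, the cancellations are routine.
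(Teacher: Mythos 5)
Your proposal is correct and follows essentially the same route as the paper's proof: the Koszul formula evaluated via the section $S$ with $S(p)=J$, $\nabla S|_p=0$, using $\nabla\omega=0$, the orthogonality of the vertical curvature part of $[X^h,Y^h]$ to horizontal lifts, and the torsion identity $\omega=d\alpha$ to handle the bracket terms. The only (harmless) extra ingredient is your explicit observation that $\nabla\alpha=0$, which the paper sidesteps by computing $\alpha([X,Y])$ directly from $d\alpha=\omega$.
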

\begin{proof}
Take a local section $S$ of ${\cal C}$ such that $S(p)=J$ and
$\nabla S|_p=0$. Then by the Koszul formula and (\ref{h-bra}) we
have
$$
\begin{array}{c}
2G_t(D_{X^h}Y^h,Z^h)_J=X_p(\omega(Y,SZ))+X(\alpha(Y)\alpha(Z))\\[6pt]
+Y_p(\omega(Z,SX))+Y(\alpha(Z)\alpha(X))-Z_p(\omega(X,SY))-Z(\alpha(X)\alpha(Y))\\[6pt]
+\omega_p(Z,J[X,Y))+\alpha_p(Z)\alpha_p([X,Y])+\omega_p(Y,J[Z,X])+\alpha_p(Y)\alpha_p([Z,X])\\[6pt]
+\omega_p(X,J[Z,Y])+\alpha_p(X)\alpha_p([Z,Y]).
\end{array}
$$
It follows from Lemma~\ref{no} that
$$
X_p(\omega(Y,SZ))=\omega(\nabla_{X_p}Y,JZ)+\omega(Y,J\nabla_{X_p}Z)
$$
in view of the identity $\nabla S|_p=0$. Moreover, we have
$$
\omega(Z,J[X,Y])=\omega(Z,J\nabla_{X}Y)-\omega(Z,J\nabla_{Y}X)
$$
by (\ref{tor}) and (\ref{om-xi}). Also,
$$
\alpha(Z)\alpha([X,Y])=-\alpha(Z)\omega(X,Y)+[X(\alpha(Y))-Y(\alpha(X))]\alpha(Z)
$$
since $\omega=d\alpha$.

 These identities easily imply the lemma.
\end{proof}

\medskip

\noindent {\bf Notation}. Let $A_1,\dots,A_{n^2+n}$ be sections of
$End({\cal D})$ such that $A_1(p),\dots,$ $A_{n^2+n}(p)$ is a
basis of the vertical space ${\cal V}_J\subset End(T_pM)$ and
$\nabla A_{\varepsilon}|_p=0$, $\varepsilon=1,\dots.,n^2+n$. Then
the vector fields $\widetilde A_{\varepsilon}$ constitute a frame
of the vertical bundle in a neighbourhood of $J$.

\medskip

The Koszul formula and Lemma~\ref{h-v} $(i)$ imply that
$(D_{\widetilde A_{\varepsilon}}\widetilde A_{\delta})_J$ is
orthogonal to every horizontal vector $X^h_J$, $X\in T_pM$. Thus we
have the following
\begin{lemma}\label{totgeo}
The fibres of the bundle $\pi:{\cal C}\to M$ are totally geodesic
submanifolds.
\end{lemma}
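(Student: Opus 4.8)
The plan is to work with the $G_t$-orthogonal splitting $T_J{\cal C}={\cal V}_J\oplus{\cal H}_J$ and use the standard fact that, when horizontal and vertical spaces are orthogonal, a fibre is totally geodesic precisely when $D_VW$ is vertical for all vertical $V,W$. Since the fields $\widetilde A_1,\dots,\widetilde A_{n^2+n}$ form a local frame of the vertical bundle near $J$ and $D_VW$ expands $C^\infty$-linearly in the $D_{\widetilde A_\varepsilon}\widetilde A_\delta$ modulo manifestly vertical correction terms, it suffices to show that each $D_{\widetilde A_\varepsilon}\widetilde A_\delta$ is vertical at $J$, i.e. that $G_t(D_{\widetilde A_\varepsilon}\widetilde A_\delta,X^h)_J=0$ for every $X\in T_pM$. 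As the adapted frame can be chosen for any prescribed point of a fibre, establishing this at each $J$ yields total geodesy of all fibres.

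To prove the vanishing I would apply the Koszul formula to $2G_t(D_{\widetilde A_\varepsilon}\widetilde A_\delta,X^h)$ and examine the six terms. The two terms $\widetilde A_\varepsilon G_t(\widetilde A_\delta,X^h)$ and $\widetilde A_\delta G_t(\widetilde A_\varepsilon,X^h)$ vanish identically by orthogonality of ${\cal V}$ and ${\cal H}$. The term $G_t([\widetilde A_\varepsilon,\widetilde A_\delta],X^h)$ vanishes because the vertical distribution is integrable, being the tangent distribution to the fibres of the submersion $\pi$, so $[\widetilde A_\varepsilon,\widetilde A_\delta]$ is again vertical. The two bracket terms $G_t([\widetilde A_\varepsilon,X^h],\widetilde A_\delta)$ and $G_t([\widetilde A_\delta,X^h],\widetilde A_\varepsilon)$ vanish at $J$ by Lemma~\ref{h-v}$(i)$, since $[X^h,\widetilde A_\varepsilon]_J=(\widetilde{\nabla_XA_\varepsilon})_J=0$ as $A_\varepsilon$ was chosen with $\nabla A_\varepsilon|_p=0$. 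This collapses the formula to $2G_t(D_{\widetilde A_\varepsilon}\widetilde A_\delta,X^h)_J=-X^h_J\,G_t(\widetilde A_\varepsilon,\widetilde A_\delta)$.

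The genuine obstacle is the surviving horizontal derivative $X^h_J\,G_t(\widetilde A_\varepsilon,\widetilde A_\delta)$, which I would show is zero by a parallelism argument. Writing the fibre metric invariantly as $G_J(B,C)=\mathrm{Tr}(C^{\ast_{g_J}}B)$, a contraction built from $\omega$, the adjoint structure $g_J=\omega(\cdot,J\cdot)$, and the endomorphisms $B,C$, one evaluates $X^h_J$ by differentiating along the curve $q\mapsto S(q)$ determined by the section $S$ with $S(p)=J$, $\nabla S|_p=0$. Along this curve the relevant section $q\mapsto\widetilde A_\varepsilon(S(q))=\tfrac12(\check A_\varepsilon(q)+S(q)\check A_\varepsilon(q)S(q))$ of (\ref{Atilde}) has vanishing covariant derivative at $p$, because $\nabla S|_p=0$ and $\nabla\check A_\varepsilon|_p=0$ (the latter following from $\nabla A_\varepsilon|_p=0$ as in the proof of Lemma~\ref{h-v}), and $g_{S(q)}$ is parallel at $p$ since $\nabla\omega=0$ by Lemma~\ref{no} and $\nabla S|_p=0$. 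As the scalar $\mathrm{Tr}(\widetilde A_\delta^{\ast}\widetilde A_\varepsilon)$ is a contraction of these parallel ingredients, its ordinary derivative at $p$ equals its covariant derivative and hence vanishes by the Leibniz rule. Therefore $D_{\widetilde A_\varepsilon}\widetilde A_\delta$ is orthogonal to every horizontal vector, so it is vertical, and the fibres of $\pi:{\cal C}\to M$ are totally geodesic.
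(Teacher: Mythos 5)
Your proof is correct and follows essentially the same route as the paper, which likewise applies the Koszul formula in the adapted vertical frame $\widetilde A_\varepsilon$ and uses Lemma~\ref{h-v}$(i)$ together with $\nabla A_\varepsilon|_p=0$ to kill the bracket terms. You additionally spell out why the surviving term $X^h_J\,G_t(\widetilde A_\varepsilon,\widetilde A_\delta)$ vanishes (the parallelism of $\omega$, $S$ and $\check A_\varepsilon$ at $p$), a point the paper's one-line proof leaves implicit.
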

\begin{lemma}\label{Dhv} If $X,Y$ are vector
fields on a neighbourhood of the point $p=\pi(J)$ and $V$ is a
vertical vector field in a neighbourhood of $J$, then
\begin{equation}\label{VD-h}
G_t(D_{X^h}Y^h,V)_J=\displaystyle{\frac{1}{2}}[-
\omega(X_p,V_JY_p)+G_t(R_p(X,Y)J,V_J)],
\end{equation}
\begin{equation}\label{D-vh}
D_{V}X^h={\cal H}D_{X^h}V,\quad
G_t(D_{V}X^h,Y^h)_J=-G_t(D_{X^h}Y^h,V)_J,
\end{equation}
where ${\cal H}$ means "the horizontal component".
\end{lemma}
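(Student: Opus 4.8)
The plan is to derive both displays from the Koszul formula together with the structural facts already established about horizontal and vertical vectors. For (\ref{VD-h}) I apply the Koszul formula to $2G_t(D_{X^h}Y^h,V)_J$ with the three entries $X^h,Y^h,V$. Because the horizontal and vertical subbundles are $G_t$-orthogonal, the functions $G_t(Y^h,V)$ and $G_t(X^h,V)$ vanish identically, so the first two derivative terms drop out. The two remaining bracket terms $G_t([X^h,V],Y^h)_J$ and $G_t([Y^h,V],X^h)_J$ also vanish: writing $V$ in the vertical frame $\widetilde A_\varepsilon$ introduced before Lemma~\ref{totgeo} and using Lemma~\ref{h-v}$(i)$ shows that $[X^h,V]$ and $[Y^h,V]$ are vertical, hence $G_t$-orthogonal to $X^h,Y^h$. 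Thus only two terms survive and the whole computation reduces to evaluating them.

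The first surviving term is $-V\big(G_t(X^h,Y^h)\big)_J$. Here I use that $G_t(X^h,Y^h)_{J'}=\omega(X,J'Y)+\alpha(X)\alpha(Y)$, so along the fibre (where $\pi$ is constant) only the factor $J'Y$ varies; differentiating in the vertical direction $V_J\in End(T_pM)$ gives $V(G_t(X^h,Y^h))_J=\omega_p(X_p,V_JY_p)$, producing the summand $-\omega(X_p,V_JY_p)$. The second surviving term is $G_t([X^h,Y^h],V)_J$; by (\ref{h-bra}) we have $[X^h,Y^h]_J=[X,Y]^h_J+R_p(X,Y)J$, and the horizontal summand is orthogonal to $V$, so this contributes $G_t(R_p(X,Y)J,V_J)$. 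In passing one checks, via Remark 3 and the $\omega$-skewness of $R(X,Y)$ from Lemma~\ref{Bi}$(i)$, that $R_p(X,Y)J=[R_p(X,Y),J]$ indeed lies in ${\cal V}_J$, so this pairing is legitimate. Halving the sum of the two terms yields (\ref{VD-h}).

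For (\ref{D-vh}) I combine torsion-freeness of $D$ with Lemma~\ref{totgeo}. Since $D$ is torsion-free, $D_VX^h-D_{X^h}V=[V,X^h]$, which is vertical by the argument above, so $D_VX^h$ and $D_{X^h}V$ share the same horizontal component. It remains to see that $D_VX^h$ is purely horizontal: for any vertical $W$ one has $G_t(D_VX^h,W)=V(G_t(X^h,W))-G_t(X^h,D_VW)$, and both terms vanish, the first because $G_t(X^h,W)\equiv0$ and the second because Lemma~\ref{totgeo} makes $D_VW$ vertical. Hence $D_VX^h={\cal H}D_{X^h}V$. The second identity then follows by pairing with $Y^h$: using $Y^h$ horizontal and $G_t(V,Y^h)\equiv0$ we get $G_t(D_VX^h,Y^h)=G_t({\cal H}D_{X^h}V,Y^h)=G_t(D_{X^h}V,Y^h)=-G_t(V,D_{X^h}Y^h)$.

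The main obstacle here is organizational rather than computational: one must correctly sort out which of the six Koszul terms survive, and in particular verify the verticality of $[X^h,V]$ through Lemma~\ref{h-v} and confirm that the vertical part of $[X^h,Y^h]$ is exactly $R_p(X,Y)J$. The only genuine computation is the vertical derivative of the fibrewise-defined function $G_t(X^h,Y^h)$, which is immediate once one keeps in mind that $\pi$ is constant along the fibre.
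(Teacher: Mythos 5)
Your proof is correct and follows essentially the same route as the paper: the Koszul formula combined with Lemma~\ref{h-v}$(i)$ and identity (\ref{h-bra}) reduces (\ref{VD-h}) to the fibrewise derivative $V(G_t(X^h,Y^h))_J=\omega_p(X_p,V_JY_p)$, and (\ref{D-vh}) follows from Lemma~\ref{totgeo} together with torsion-freeness of $D$, exactly as in the paper. The only difference is that you spell out a few verifications (verticality of $[X^h,V]$, of $R_p(X,Y)J$, and the orthogonality argument for $D_VX^h$) that the paper leaves implicit.
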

\begin{proof}
The Koszul formula, Lemma~\ref{h-v} $(i)$ and identity (\ref{h-bra})
imply
$$
2G_t(D_{X^h}Y^h,\widetilde A_{\varepsilon})_J=-(\widetilde
A_{\varepsilon})_J(G_t(X^h,Y^h))+G_t(R_p(X,Y)J,\widetilde
A_{\varepsilon}).
$$
Let $\gamma$ be a curve in the fibre of ${\cal C}$ through the point
$J$ such that $\gamma(0)=J$ and $\stackrel{\bf
.}{\gamma}(0)=(A_{\varepsilon})_J$. Then
$$
\begin{array}{c}
(\widetilde
A_{\varepsilon})_J(G_t(X^h,Y^h))=
\displaystyle{\frac{d}{dt}}(\omega(X_p,\gamma(t)Y_p)+\alpha(X_p)\alpha(Y_p))|_{t=0}\\[8pt]
= \omega(X_p,(A_{\varepsilon})_JY_p).
\end{array}
$$
This proves the first formula of the lemma.

By Lemma~\ref{totgeo}, $D_{V}X^h$ is orthogonal to every vertical
vector field, thus it is horizontal. Moreover, $[V,X^h]$ is a
vertical vector field, hence $D_{V}X^h={\cal H}D_{X^h}V$. We also
have
$$G_t(D_{V}X^h,Y^h)=G_t([V,X^h]+D_{X^h}V,\,Y^h)=-G_t(V,D_{X^h}Y^h).$$
\end{proof}

Lemmas~\ref{Dhh} and \ref{Dhv} imply the following.
\begin{cotmb}\label{D-xi-h} Let $X,Y\in T_pM$ and $V,W\in{\cal V}_J$.
Then
$$
\begin{array}{c}
G_t(D_{X^h}\xi^h,Y^h)_J=\displaystyle{\frac{1}{2}}\omega(X,Y), \quad
G_t(D_{V}\xi^h,W)=0\\[8pt]
G_t(D_{X^h}\xi^h,V)_J=G_t(D_{V}\xi^h,X^h)_J=\displaystyle{\frac{1}{2}}G_t(R(X,\xi_p)J,V).
\end{array}
$$
\end{cotmb}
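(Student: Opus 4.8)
The plan is to derive the three displayed identities in Corollary~\ref{D-xi-h} directly from Lemmas~\ref{Dhh} and \ref{Dhv} by specializing the vector fields to the Reeb field and to vertical fields, using $\alpha(\xi)=1$, $\alpha(Z)=0$ for $Z\in{\cal D}$, $\nabla_X\xi=0$, and $\omega(\xi,\cdot)=0$.

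First I would establish $G_t(D_{X^h}\xi^h,Y^h)_J=\frac12\omega(X,Y)$. I apply Lemma~\ref{Dhh} with the second vector field chosen to be $\xi$. Because $\nabla_X\xi=0$ (by (\ref{nxi})) and $\alpha(\xi)\equiv 1$, the term $X_p(\alpha(\xi))-\alpha_p(\nabla_X\xi)$ vanishes, killing the first correction line; the leading term $G_t((\nabla_X\xi)^h,Y^h)=0$ for the same reason. In the remaining bracket $\frac12[\alpha_p(X)\omega_p(\xi,Y)+\alpha_p(\xi)\omega_p(X,Y)-\alpha_p(Y)\omega_p(X,\xi)]$, the first and third products vanish since $\omega(\xi,\cdot)=0$ by (\ref{om-xi}), leaving precisely $\frac12\omega_p(X,Y)$, as claimed. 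I should be mindful that Lemma~\ref{Dhh} is stated with three vector-field arguments, so I must read it as $G_t(D_{\xi^h}?,?)$ versus $G_t(D_{?^h}\xi^h,?)$; the natural reading places $\xi$ in the middle slot (the field being differentiated), which is the one that produces the stated formula.

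Next, the relation $G_t(D_V\xi^h,W)=0$ for $V,W\in{\cal V}_J$ follows from Lemma~\ref{totgeo}: since the fibres are totally geodesic, $D_V\xi^h$ need not be vertical, but here the cleaner route is Lemma~\ref{Dhv}, whose second block gives $G_t(D_V X^h,Y^h)_J=-G_t(D_{X^h}Y^h,V)_J$ and $D_VX^h={\cal H}D_{X^h}V$. Setting $X=\xi$ and pairing against the vertical $W$, the horizontal component $D_V\xi^h={\cal H}D_{\xi^h}V$ is orthogonal to $W$, giving $0$ immediately. For the mixed terms, I invoke (\ref{VD-h}) with $Y=\xi$: the piece $-\omega(X_p,V_J\xi_p)$ vanishes because every $V_J\in{\cal V}_J$ annihilates $\xi$ (Remark~3), leaving $G_t(D_{X^h}\xi^h,V)_J=\frac12 G_t(R_p(X,\xi)J,V_J)$; the symmetry $G_t(D_V\xi^h,X^h)_J=G_t(D_{X^h}\xi^h,V)_J$ is then read off from the second equation of (\ref{D-vh}) combined with the fact that the vertical/horizontal inner products are arranged to match.

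The main obstacle is purely bookkeeping: keeping the slot conventions of Lemma~\ref{Dhh} straight (which argument is differentiated versus which are paired), and tracking that the vertical endomorphisms kill $\xi$ so that curvature terms $R(X,\xi)J$ survive while symmetric/antisymmetric pieces cancel. No genuinely new computation is required — every identity is a one- or two-line substitution into the two preceding lemmas, so I expect the entire proof to read simply as \emph{``This follows from Lemmas~\ref{Dhh} and \ref{Dhv} by taking $Y=\xi$ and using $\nabla\xi=0$, $\omega(\xi,\cdot)=0$, and $V\xi=0$.''} The only care needed is confirming that the pairing conventions make the two expressions $G_t(D_{X^h}\xi^h,V)$ and $G_t(D_V\xi^h,X^h)$ literally equal rather than merely equal up to sign.
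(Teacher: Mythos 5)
Your proposal is correct and follows exactly the route the paper intends: the paper gives no separate argument for this corollary beyond the remark that it follows from Lemmas~\ref{Dhh} and~\ref{Dhv}, and your substitutions ($Y=\xi$ in Lemma~\ref{Dhh}, then (\ref{VD-h})--(\ref{D-vh}) with $\nabla\xi=0$, $\omega(\xi,\cdot)=0$, $\alpha(\xi)=1$, and $V_J\xi=0$) are precisely the intended one-line verifications, including the sign check $R(\xi,X)=-R(X,\xi)$ that makes the two mixed terms literally equal.
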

\begin{cotmb}\label{d-eta}
$$
d\eta_t(X^h,Y^h)=\omega(X,Y),\quad
d\eta_t(X^h,V)=d\eta_t(V,W)=0,\quad \delta\eta_t=0.
$$
\end{cotmb}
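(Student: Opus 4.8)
The plan is to compute the three quantities from Corollary~\ref{d-eta} directly from the definition $\eta_t(X)=G_t(X,\xi^h)$ and the formula $d\eta_t(X,Y)=X(\eta_t(Y))-Y(\eta_t(X))-\eta_t([X,Y])$, evaluating everything at a point $J\in{\cal C}$ with $p=\pi(J)$. The key simplification is that the Levi-Civita connection $D=D_t$ is torsion-free, so I may replace each Lie bracket by a difference of covariant derivatives: $d\eta_t(X,Y)=(D_X\eta_t)(Y)-(D_Y\eta_t)(X)$, and since $\eta_t(\,\cdot\,)=G_t(\,\cdot\,,\xi^h)$ with $G_t$ parallel, one gets $(D_X\eta_t)(Y)=G_t(Y,D_X\xi^h)$. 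Hence
\begin{equation}\label{deta-reduction}
d\eta_t(X,Y)=G_t(Y,D_X\xi^h)-G_t(X,D_Y\xi^h).
\end{equation}
This reduces all three formulas to the covariant derivatives of $\xi^h$ already catalogued in Corollary~\ref{D-xi-h}.

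For the first formula I take $X=X^h$, $Y=Y^h$ in \eqref{deta-reduction}. Corollary~\ref{D-xi-h} gives $G_t(Y^h,D_{X^h}\xi^h)_J=\frac{1}{2}\omega(X,Y)$ and symmetrically $G_t(X^h,D_{Y^h}\xi^h)_J=\frac{1}{2}\omega(Y,X)=-\frac{1}{2}\omega(X,Y)$; subtracting yields $d\eta_t(X^h,Y^h)=\omega(X,Y)$. For the mixed term $d\eta_t(X^h,V)$ I use that $G_t(X^h,D_V\xi^h)=G_t(D_V\xi^h,X^h)$ and $G_t(V,D_{X^h}\xi^h)=G_t(D_{X^h}\xi^h,V)$, both equal to $\frac{1}{2}G_t(R(X,\xi_p)J,V)$ by the last line of Corollary~\ref{D-xi-h}; their difference in \eqref{deta-reduction} is therefore $0$. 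For $d\eta_t(V,W)$ with $V,W\in{\cal V}_J$, the two terms are $G_t(W,D_V\xi^h)$ and $G_t(V,D_W\xi^h)$, each of which vanishes because $G_t(D_V\xi^h,W)=0$ by Corollary~\ref{D-xi-h}; hence $d\eta_t(V,W)=0$. Here I should note that the formulas of Corollary~\ref{D-xi-h} are stated pointwise at $J$, so to feed them into \eqref{deta-reduction} I extend $X,Y$ to vector fields and $V,W$ to vertical fields in the standard way; torsion-freeness makes the choice of extension irrelevant for $d\eta_t$.

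It remains to prove the codifferential claim $\delta\eta_t=0$, i.e.\ $\delta\eta_t=-\sum_a G_t((D_{e_a}\eta_t)^{\sharp},e_a)=-\sum_a G_t(D_{e_a}\xi^h,e_a)$ over a $G_t$-orthonormal frame $\{e_a\}$ of $T_J{\cal C}$. Splitting the frame into horizontal vectors and a vertical $G_t$-orthonormal frame, each horizontal contribution $G_t(D_{X^h}\xi^h,X^h)$ vanishes by the skew-symmetry $G_t(D_{X^h}\xi^h,Y^h)_J=\frac{1}{2}\omega(X,Y)$ (antisymmetric in $X,Y$, so it is zero on the diagonal), and each vertical contribution $G_t(D_V\xi^h,V)=0$ again by Corollary~\ref{D-xi-h}; summing gives $\delta\eta_t=0$. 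The main (and essentially only) obstacle is bookkeeping: making sure that the reduction \eqref{deta-reduction} is justified, that the sign conventions of $d\eta$ (twice the convention of \cite{Blair}) and the curvature are handled consistently, and that the orthonormal-frame splitting in the $\delta\eta_t$ computation correctly exploits the horizontal/vertical orthogonality built into $G_t$; no genuinely hard estimate is involved.
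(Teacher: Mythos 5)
Your proposal is correct and follows exactly the route the paper intends: the corollary is stated as an immediate consequence of Corollary~\ref{D-xi-h}, and your reduction $d\eta_t(X,Y)=G_t(Y,D_X\xi^h)-G_t(X,D_Y\xi^h)$ via torsion-freeness and metric-compatibility of $D_t$, followed by substitution of the covariant derivatives of $\xi^h$, is the implicit argument. The sign bookkeeping (the convention $d\eta(X,Y)=X\eta(Y)-Y\eta(X)-\eta([X,Y])$, twice Blair's) and the orthonormal-frame computation of $\delta\eta_t$ are handled correctly.
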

\begin{cotmb}
Every integral curve of $\xi^h$ is a geodesic in $({\cal C},G_t)$.
\end{cotmb}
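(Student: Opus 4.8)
The plan is to show that $D_{\xi^h}\xi^h=0$ identically on ${\cal C}$. An integral curve $\gamma$ of $\xi^h$ satisfies $\dot\gamma=\xi^h|_\gamma$, so it is a geodesic precisely when $D_{\dot\gamma}\dot\gamma=(D_{\xi^h}\xi^h)|_\gamma$ vanishes; hence it is enough to prove the pointwise identity $D_{\xi^h}\xi^h=0$. Since $G_t$ is a Riemannian metric and $T_J{\cal C}={\cal V}_J\oplus{\cal H}_J$, I would verify this by testing $D_{\xi^h}\xi^h$ against the two spanning families $\{Y^h_J:Y\in T_pM\}$ and $\{V:V\in{\cal V}_J\}$, for each $J\in{\cal C}$ with $p=\pi(J)$.

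Both tests are immediate specializations of Corollary \ref{D-xi-h} with $X=\xi$. For the horizontal directions, the first formula of that corollary yields $G_t(D_{\xi^h}\xi^h,Y^h)_J=\frac{1}{2}\omega(\xi_p,Y)$, which is zero by $(\ref{om-xi})$. For the vertical directions, the third formula yields $G_t(D_{\xi^h}\xi^h,V)_J=\frac{1}{2}G_t(R(\xi_p,\xi_p)J,V)$, which vanishes because the curvature tensor is skew-symmetric in its first two arguments, so $R(\xi_p,\xi_p)=0$. Thus $D_{\xi^h}\xi^h$ is orthogonal to every horizontal and every vertical vector at $J$, and positive-definiteness of $G_t$ on $T_J{\cal C}$ forces $D_{\xi^h}\xi^h=0$.

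There is essentially no obstacle here: granted Corollary \ref{D-xi-h}, the statement collapses to the two elementary facts $\omega(\xi,\cdot)=0$ and $R(\xi,\xi)=0$. The only point deserving a moment's care is the logical one, namely that orthogonality of $D_{\xi^h}\xi^h$ to the spanning families $\{Y^h\}$ and $\{V\}$ of $T_J{\cal C}$ suffices to conclude the vector itself is zero, which is exactly the nondegeneracy of the metric $G_t$.
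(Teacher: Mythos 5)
Your argument is correct and is essentially the paper's intended derivation: the corollary is stated as an immediate consequence of Corollary~\ref{D-xi-h} (itself following from Lemmas~\ref{Dhh} and \ref{Dhv}), and specializing its first and third formulas to $X=\xi$ gives exactly your two vanishing statements via $\omega(\xi,\cdot)=0$ and the skew-symmetry $R(\xi,\xi)=0$. Nondegeneracy of $G_t$ together with the decomposition $T_J{\cal C}={\cal V}_J\oplus{\cal H}_J$ then yields $D_{\xi^h}\xi^h=0$, as you say.
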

\begin{cotmb}\label{Killing}
The vector field $\xi^h$ on $({\cal C},G_t)$ is Killing if and only
if $R(X,\xi)Y=0$ for every $X,Y\in TM$.
\end{cotmb}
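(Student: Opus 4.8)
The plan is to test the Killing equation for $\xi^h$ directly: $\xi^h$ is Killing on $({\cal C},G_t)$ if and only if the symmetric part of the endomorphism $Z\mapsto D_Z\xi^h$ vanishes, i.e. $G_t(D_Z\xi^h,W)+G_t(D_W\xi^h,Z)=0$ for all tangent vectors $Z,W$. Using the splitting $T_J{\cal C}={\cal V}_J\oplus{\cal H}_J$ I would evaluate this symmetric part on the three types of pairs, reading the values off Corollary~\ref{D-xi-h}. For two horizontal lifts $X^h,Y^h$ the sum is $\frac12\omega(X,Y)+\frac12\omega(Y,X)=0$ by skew-symmetry of $\omega$; for two vertical vectors it is $0+0=0$; and for a mixed pair $X^h,V$ it equals $\frac12 G_t(R(X,\xi)J,V)+\frac12 G_t(R(X,\xi)J,V)=G_t(R(X,\xi_p)J,V)$. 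Hence $\xi^h$ is Killing if and only if $G_t(R(X,\xi_p)J,V)=0$ for all $J\in{\cal C}$, all $X\in T_pM$, and all $V\in{\cal V}_J$.

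Next I would identify $R(X,\xi)J$ algebraically. Writing $B=R(X,\xi)$, the curvature of the induced connection on $End(TM)$ acts by $R(X,\xi)J=[B,J]=B\circ J-J\circ B$. Two facts make this tractable: by Lemma~\ref{Bi}$(i)$ and skew-symmetry of $\omega$ one has $B\in sp(\omega)$; and $B\xi=R(X,\xi)\xi=0$ because $\nabla\xi=0$ forces $R(\cdot\,,\cdot)\xi=0$, while $B$ preserves ${\cal D}$. A one-line computation using $J^{2}=-\mathrm{Id}$ on ${\cal D}$ then gives $[B,J]J+J[B,J]=BJ^{2}-J^{2}B=0$, so by Remark~3 the endomorphism $[B,J]$ lies in ${\cal V}_J=T_J{\cal Z}$; that is, $R(X,\xi)J$ is already vertical. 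Since $G_t$ restricts to the positive-definite metric $tG_J$ on ${\cal V}_J$, the mixed condition $G_t([B,J],V)=0$ for all $V\in{\cal V}_J$ is equivalent to $[B,J]=0$. Thus $\xi^h$ is Killing precisely when $R(X,\xi)$ commutes with every $J\in{\cal C}$.

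The crux is then the purely algebraic claim, applied fibrewise in each ${\cal Z}({\cal D}_p,\omega_p)$: an endomorphism $B\in sp(\omega)$ commuting with every compatible complex structure $J\in{\cal Z}$ must vanish. I would prove this by differentiation. Fix $J_0\in{\cal Z}$; for any $W\in sp(\omega)$ the curve $J(t)=\exp(tW)J_0\exp(-tW)$ lies in ${\cal Z}$ since $Sp(\omega)$ acts on ${\cal Z}$, and differentiating $[B,J(t)]=0$ at $t=0$ yields $[B,[W,J_0]]=0$. As $W$ ranges over $sp(\omega)$ the brackets $[W,J_0]$ fill out $\frak m=\mathrm{im}\,\mathrm{ad}_{J_0}$: on $\frak u$ one has $\mathrm{ad}_{J_0}=0$, while $\mathrm{ad}_{J_0}$ maps $\frak m$ onto itself because $[J_0,V]=2J_0V$ for $V\in\frak m$ and $V\mapsto J_0V$ is an isomorphism of $\frak m$. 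Hence $B$ centralizes $\frak m$. The centralizer of $B$ is a subalgebra containing $\frak m$, so it contains the subalgebra generated by $\frak m$, namely $\frak m\oplus[\frak m,\frak m]$; this is a nonzero ideal of $sp(\omega)\cong sp(2n,{\Bbb R})$, which is simple, so it equals all of $sp(\omega)$. Therefore $B$ is central in $sp(\omega)$, and simplicity forces $B=0$.

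Finally I would assemble the pieces. The algebraic claim shows $R(X,\xi)$ vanishes on each ${\cal D}_p$; together with $R(X,\xi)\xi=0$ this gives $R(X,\xi)Y=0$ for all $X,Y\in TM$. Conversely, if $R(X,\xi)Y=0$ identically then $R(X,\xi)J=[R(X,\xi),J]=0$, the mixed term vanishes, and $\xi^h$ is Killing. The main obstacle is the algebraic claim of the third paragraph; everything before it is bookkeeping with Corollary~\ref{D-xi-h}. Within that claim the one delicate point is the identity $[\frak m,\frak m]=\frak u$ (equivalently, that $\frak m$ generates $sp(\omega)$), which I would settle by the ideal argument using simplicity of $sp(2n,{\Bbb R})$ rather than invoking the symmetric-space structure directly.
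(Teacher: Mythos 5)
Your proof is correct, and up to the reduction it runs parallel to the paper's: both arguments read off Corollary~\ref{D-xi-h} that $\xi^h$ is Killing exactly when $R(X,\xi)J=0$ for every $J$ in each fibre ${\cal C}_p$ and every $X\in T_pM$ --- you are in fact slightly more careful here, since you verify that $R(X,\xi)J$ lies in ${\cal V}_J$ before concluding its vanishing from the vanishing of all its $G_t$-products with vertical vectors, a point the paper leaves implicit. Where you genuinely diverge is in the linear-algebra core, namely the implication that an element $B=R(X,\xi)$ of $sp(\omega_p)$ commuting with every compatible $J$ must vanish. The paper, following Vaisman, evaluates $R(X,\xi)J=0$ on the one-parameter families of symplectic bases $e_i'=e_i$, $e_{i+n}'=\lambda e_i+e_{i+n}$, extracts the component identities (\ref{k1})--(\ref{k2}), and kills all components $\omega(R(X,\xi)e_\alpha,e_\beta)$ using Lemma~\ref{Bi}~$(i)$ and polarization. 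You instead differentiate the commutation relation along the $Sp(\omega)$-orbit to see that $B$ centralizes $\frak m=T_{J_0}{\cal Z}$, and then use that $\frak m$ generates $sp(\omega)$ (since $\frak m\oplus[\frak m,\frak m]$ is a nonzero ideal of the simple algebra $sp(2n,{\Bbb R})$) to force $B$ into the trivial centre. The paper's computation is elementary and self-contained; yours is shorter and more conceptual, exploits the homogeneous-space description ${\cal Z}\cong Sp(2n,{\Bbb R})/U(n)$ already set up in Section 2.1, and isolates the reusable fact that the centralizer of ${\cal Z}$ in $sp(\omega)$ is trivial --- at the cost of invoking simplicity of $sp(2n,{\Bbb R})$, which the paper never needs. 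Both routes are sound.
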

\begin{proof}
By Corollary~\ref{D-xi-h}, $\xi^h$ is Killing if and only if
$R(X,\xi)J=0$ for every $J\in{\cal C}$ and $X\in T_{\pi(J)}M$. Fix
a tangent vector $X\in T_pM$ and suppose that $R(X,\xi)J=0$ for
every $J$ in the fibre ${\cal C}_p$ of the bundle ${\cal C}$. Let
$e_1,\dots,e_{2n}$ be a symplectic basis of ${\cal D}_p$ and let
$J$ be the complex structure of ${\cal D}_p$ corresponding to this
basis, $Je_{i}=e_{i+n}$, $i=1,\dots,n$. Then $J\in {\cal C}_p$ and
the identity $R(X,\xi)J=0$ implies
\begin{equation}\label{k1}
\omega(R(X,\xi)e_{i+n},e_{k})+\omega(R(X,\xi)e_{i},e_{k+n})=0,\quad
i,k=1,\dots,n,
\end{equation}
\begin{equation}\label{k2}
\omega(R(X,\xi)e_{i+n},e_{k+n})-\omega(R(X,\xi)e_{i},e_{k})=0.
\end{equation}
For $\lambda\in{\Bbb R}$,  as in \cite{V86}, consider the symplectic
basis $e_i'=e_i, e_{i+n}'=\lambda e_i+e_{i+n}$. Applying (\ref{k1})
for this basis, we get $\omega(R(X,\xi)e_i,e_k)=0$, hence
$\omega(R(X,\xi)e_{i+n},e_{k+n})=0$ by (\ref{k2}). It follows from
the identities
$$
\omega(R(X,\xi)e_i,e_k)=\omega(R(X,\xi)e_{i+n},e_{k+n})=0,
$$
Lemma~\ref{Bi} $(i)$, and identity (\ref{k1}) that
$\omega(R(X,\xi)Z,Z)=0$ for every $Z\in{\cal D}_p$. In view of
Lemma~\ref{Bi} (i), polarization of the latter identity gives
$\omega(R(X,\xi)Y,Z)=0$ for $Y,Z\in{\cal D}_p$. Therefore
$R(X,\xi)Y=0$  for $Y\in{\cal D}_p$. For $Y=\xi$ this is obvious.

Conversely, if $R(X,\xi)Y=0$ for every $X,Y$, we have clearly
$R(X,\xi)J=0$, so $\xi^h$ is Killing.
\end{proof}

\begin{prop}\label{N1}
Let $J\in{\cal C}$, $X,Y\in T_{\pi(J)}M$,
$V,W\in{\cal V}_J$. Then
$$
\begin{array}{c}
N^{(1)}_k(X^h,Y^h)_J=-R(X,Y)J+R(JX,JY)J\\[6pt]
-(-1)^{k+1}J(R(JX,Y)J+R(X,JY)J)\ ,\\[6pt]
N^{(1)}_k(X^h,V)=[1+(-1)^{k}](JVX)^h_J\ ,\quad N^{(1)}_k(V,W)=0\ .
\end{array}
$$
\end{prop}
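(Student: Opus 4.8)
The plan is to compute $N^{(1)}_k$ straight from its definition
$$N^{(1)}_k(P,Q)=\Phi_k^{2}[P,Q]+[\Phi_kP,\Phi_kQ]-\Phi_k[\Phi_kP,Q]-\Phi_k[P,\Phi_kQ]+d\eta_t(P,Q)\xi^h,$$
treating the three cases $(X^h,Y^h)$, $(X^h,V)$ and $(V,W)$ in turn. Since $N^{(1)}_k$ is a tensor, I may evaluate it on whatever local extensions are most convenient, and I will dispose of the $d\eta_t$ term in each case by Corollary~\ref{d-eta}. One preliminary remark is that for all $U,W\in T_pM$ and $J\in{\cal C}$ the curvature endomorphism $R_p(U,W)J$ lies in the vertical space ${\cal V}_J$: indeed $\nabla$ preserves ${\cal D}$, $\omega$ and $\xi$ (Lemma~\ref{no} together with the defining properties of a contact connection), so it restricts to a connection on the subbundle ${\cal C}$ whose curvature is tangent to the fibres. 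Consequently, on every term $R_p(\cdot,\cdot)J$ the operator $\Phi_k^{2}$ acts as minus the identity and $\Phi_k$ acts as $(-1)^{k+1}{\cal J}$, where ${\cal J}V=JV$ is the fibre complex structure.

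For the vertical-vertical case I extend $V,W$ to vertical vector fields. All brackets occurring in $N^{(1)}_k(V,W)$ are then brackets of vertical fields, hence tangent to the fibre (Lemma~\ref{totgeo}), on which $\Phi_k$ restricts to $(-1)^{k+1}{\cal J}$ with ${\cal J}^2=-\mathrm{id}$, and $d\eta_t(V,W)=0$. Thus $N^{(1)}_k(V,W)$ reduces to the ordinary Nijenhuis tensor of the fibre almost complex structure, the sign $(-1)^{k+1}$ dropping out because $N_{-{\cal J}}=N_{\cal J}$. As each fibre is the Kähler manifold ${\cal Z}({\cal D}_p,\omega_p)$, the structure ${\cal J}$ is integrable, so this Nijenhuis tensor vanishes and $N^{(1)}_k(V,W)=0$.

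For the horizontal-vertical case I extend $V$ as $\widetilde A_J=V$ with a section $A$ of $End({\cal D})$ satisfying $\nabla A|_p=0$, so that every $\widetilde{\nabla_\bullet A}$ term in Lemma~\ref{h-v} dies at $J$. Then parts $(i)$ and $(ii)$ give $[X^h,\widetilde A]_J=0=[X^h,\Phi_k\widetilde A]_J$, while $(iii)$ and $(iv)$ give $[\Phi_kX^h,\widetilde A]_J=-(VX)^h_J$ and $[\Phi_kX^h,\Phi_k\widetilde A]_J=-(-1)^{k+1}(J(VX))^h_J$. Using $d\eta_t(X^h,V)=0$ and $\Phi_k(VX)^h_J=(J(VX))^h_J$, the four bracket terms combine to $[1-(-1)^{k+1}](J(VX))^h_J=[1+(-1)^k](JVX)^h_J$, which is the asserted value.

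The horizontal-horizontal case is the computational core. I apply Lemma~\ref{curv} to each of the four brackets (for instance $[\Phi_kX^h,\Phi_kY^h]_J=[SX,SY]^h_J+R_p(JX,JY)J$) and add the term $\omega(X,Y)\xi^h_J$ coming from $d\eta_t(X^h,Y^h)=\omega(X,Y)$. After applying $\Phi_k^2$ (as $-\mathrm{id}$) and $\Phi_k$ (as $(-1)^{k+1}{\cal J}$) to the vertical curvature pieces, these assemble exactly into $-R(X,Y)J+R(JX,JY)J-(-1)^{k+1}J\big(R(JX,Y)J+R(X,JY)J\big)$. The main obstacle is to show that the remaining horizontal-plus-$\xi^h$ contributions cancel. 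I handle this by taking the section $S$ with $S(p)=J$, $\nabla S|_p=0$, rewriting every Lie bracket through $\nabla$ by the torsion identity (\ref{tor}), and invoking $\nabla S|_p=0$, $J\xi=0$ and $J^2=-\mathrm{id}+\alpha\otimes\xi$ (cf. (\ref{J2})). The $\nabla_\bullet\bullet$ terms then cancel in pairs; the only genuinely delicate point is the coefficient of $\xi$, which requires rewriting $\alpha(\nabla_XY-\nabla_YX)$ as $\alpha([X,Y])+\omega(X,Y)$ (again by (\ref{tor})) so that the surviving $\xi$-terms cancel against the $\alpha([X,Y])\xi^h$ produced by $\Phi_k^2[X^h,Y^h]$ and the $\omega(X,Y)\xi^h$ from $d\eta_t$. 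Reading off what is left then yields the stated vertical formula.
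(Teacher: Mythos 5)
Your proposal is correct and follows essentially the same route as the paper: Lemma~\ref{curv} for the horizontal--horizontal brackets with the horizontal and $\xi^h$ contributions cancelling via the section $S$ with $\nabla S|_p=0$ and the torsion identity (\ref{tor}), Lemma~\ref{h-v} together with Corollary~\ref{d-eta} for the mixed case, and integrability of the fibre complex structure for the vertical--vertical case. The only cosmetic difference is your extra normalization $\nabla A|_p=0$ in the mixed case, which the paper does not need because the $\widetilde{\nabla_{\bullet}A}$ terms cancel in pairs anyway.
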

\begin{proof}
Extending $X,Y$ to vector fields in a neighbourhood of the point
$p=\pi(J)$ and taking into account Lemma~\ref{curv},
Corollary~\ref{d-eta}, and identity (\ref{tor}), we easily see that
$$
\begin{array}{c}
{\cal H}N^{(1)}_k(X^h,Y^h)_J=\\[6pt]
\big(-S_p(\nabla_{X}S)_p(Y)+S_p(\nabla_{Y}S)_p(X)+(\nabla_{SX}S)_p(Y)-(\nabla_{SY}S)_p(X)\big)^h_J\\[6pt]
-d\alpha(SX_p,SY_p)\xi^h_J+\omega(X_p,Y_p)\xi^h_J,
\end{array}
$$
$S$ being defined in Section 3. We have $d\alpha(SX_p,SY_p)=\omega(JX_p,JY_p)=\omega(X_p,Y_p)$. Hence
$${\cal H}N^{(1)}_k(X^h,Y^h)_J=0$$
since $\nabla S|_p=0$. Now the first formula of the lemma follows
from Lemma~\ref{curv} and the fact that $\Phi_k^2=-Id$ on the
vertical spaces.

Let $A$ be a section of $End({\cal D})$ such that $A_p=V$. Denote by
$\widetilde A$ the vertical vector field on ${\cal C}$ defined by
(\ref{Atilde}). Then
$$
N^{(1)}_k(X^h,V)=N^{(1)}_k(X^h,\widetilde
A)_J=[1+(-1)^{k}](J\widetilde A_JX)^h_J
$$
by Lemma~\ref{h-v} and Corollary~\ref{d-eta}.

Corollary~\ref{d-eta} and the fact that $\Phi_k$ is a complex
structure on the fibres of ${\cal C}$ imply $N^{(1)}_k(V,W)=0$.
\end{proof}

Now we set
$$
R_{\cal D}(X,Y,Z,T)=\omega(R(X,Y)Z,T)~\mbox{for}~X,Y,Z,T\in{\cal D}.
$$
Note that this covariant $4$-tensor satisfies the identities $(i)$,
$(ii)$, $(iii)$ in Section 2.2

\begin{tw}\label{nor} $(i)$ The almost contact metric structure $(\Phi_1,\xi^h,G_t)$ is normal
if and only if $R(X,\xi)Y=0$ for every $X,Y\in TM$ and the tensor
$R_{\cal D}$ is of Ricci type.

\noindent $(ii)$ The almost contact metric structure
$(\Phi_1,\xi^h,G_t)$ is never normal.
\end{tw}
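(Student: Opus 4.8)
The plan is to read off normality of each structure directly from the formulas for $N^{(1)}_k$ in Proposition \ref{N1}. For the structure with $k=2$ (part $(ii)$), Proposition \ref{N1} gives $N^{(1)}_2(X^h,V)=2(JVX)^h_J$. Since $\dim{\cal V}_J=n^2+n>0$, I pick any nonzero $V\in{\cal V}_J$; there is $X\in{\cal D}_{\pi(J)}$ with $VX\neq 0$, and $J$ is invertible on ${\cal D}_{\pi(J)}$, so $JVX\neq 0$ and its horizontal lift $(JVX)^h_J$ is nonzero. Hence $N^{(1)}_2$ never vanishes identically and the structure is never normal, irrespective of $M$ and $\nabla$.

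For $k=1$ the factor $1+(-1)^k$ is zero, so by Proposition \ref{N1} the components $N^{(1)}_1(X^h,V)$ and $N^{(1)}_1(V,W)$ vanish automatically, and normality is equivalent to the vanishing of the vertical vector
$$N^{(1)}_1(X^h,Y^h)_J=-R(X,Y)J+R(JX,JY)J-J\big(R(JX,Y)J+R(X,JY)J\big)$$
for all $J\in{\cal C}$, $p=\pi(J)$, and $X,Y\in T_pM$. Writing $X=X_0+\alpha(X)\xi$, $Y=Y_0+\alpha(Y)\xi$ with $X_0,Y_0\in{\cal D}_p$ and using bilinearity and antisymmetry of $N^{(1)}_1$, this decouples into (a) $N^{(1)}_1(X_0^h,Y_0^h)_J=0$ for $X_0,Y_0\in{\cal D}_p$, and (b) $N^{(1)}_1(X_0^h,\xi^h)_J=0$ for $X_0\in{\cal D}_p$.

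For (b), since $J\xi=0$ the formula collapses to $-R(X_0,\xi)J-J\circ R(JX_0,\xi)J$. I would encode $R(\cdot,\xi)$ by the covariant tensor $T(X,Y,Z)=\omega(R(X,\xi)Y,Z)$, which is fully symmetric: symmetry in $(Y,Z)$ is Lemma \ref{Bi}$(i)$, while symmetry in $(X,Y)$ follows from the Bianchi identity $R(X,\xi)Y=R(Y,\xi)X$ established in the proof of Lemma \ref{Bi}. Rewriting (b) through $T$ turns it into the identity $T(JX,Z,W)+T(X,JZ,W)+T(X,Z,JW)=T(JX,JZ,JW)$, required for every compatible $J$; exploiting the freedom to vary $J$ then forces $T=0$, i.e. $R(X,\xi)Y=0$ for all $X,Y$ (for $n=1$ this is automatic, since $\omega$-antisymmetry clashes with the $(Y,Z)$-symmetry of $T$). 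Conversely $R(\cdot,\xi,\cdot)=0$ makes both terms vanish.

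For (a), the vector $N^{(1)}_1(X_0^h,Y_0^h)_J$ lies in $T_J{\cal Z}({\cal D}_p,\omega_p)$ and depends only on $R_{\cal D}$; the claim is that its vanishing for every $J$ is equivalent to $R_{\cal D}\in{\mathscr S}^r$. Sufficiency I would check by substituting the explicit Ricci-type form (\ref{red}) with $P=\sigma_R$ and computing. For necessity I would use the $Sp(\omega)$-equivariance of the assignment $R_{\cal D}\mapsto\big(J\mapsto N^{(1)}_1(X_0^h,Y_0^h)_J\big)$ together with the decomposition ${\mathscr S}={\mathscr S}^0\oplus{\mathscr S}^r$: the ${\mathscr S}^r$-summand already contributes zero, so vanishing is equivalent to the ${\mathscr S}^0$-part mapping to zero, and since ${\mathscr S}^0$ is $Sp(\omega)$-irreducible for $n>1$, Schur's lemma reduces the matter to showing the map is not identically zero on ${\mathscr S}^0$; this is done by pairing $N^{(1)}_1(X_0^h,Y_0^h)_J$ at a fixed $J$ with the orthonormal basis $\{V_{ij},{\cal J}V_{ij}\}$ of $T_J{\cal Z}$ from (\ref{Vij}) and (\ref{JVij}) and reading off the curvature components. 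For $n=1$ one has ${\mathscr S}={\mathscr S}^r$, so (a) is vacuous, matching the statement, and combining (a) with (b) yields part $(i)$. The main obstacle is precisely case (a): tracking the endomorphism $N^{(1)}_1(X_0^h,Y_0^h)_J$ as $J$ sweeps the whole fibre and matching its vanishing exactly with membership in ${\mathscr S}^r$. The equivariance and irreducibility argument frames this conceptually, but both the sufficiency computation from (\ref{red}) and the nonvanishing on ${\mathscr S}^0$ still rest on the explicit component bookkeeping, which is the technical heart of the proof.
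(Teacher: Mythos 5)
Your overall architecture coincides with the paper's: reduce normality to the vanishing of $N^{(1)}_k$ on pairs of horizontal lifts via Proposition~\ref{N1}, split that condition into a ${\cal D}$-part and a $\xi$-part, and for part $(ii)$ (which you correctly read as concerning $\Phi_2$, despite the typo in the statement) exhibit a nonzero value $N^{(1)}_2(X^h,V)=2(JVX)^h_J$ --- the paper does exactly this with the explicit vertical vector $V_{1,2}$ of (\ref{Vij}). For the $\xi$-part, your reformulation through the fully symmetric tensor $T(X,Y,Z)=\omega(R(X,\xi)Y,Z)$ and the identity $T(JX,Z,W)+T(X,JZ,W)+T(X,Z,JW)=T(JX,JZ,JW)$ is correct, and your ``vary $J$'' step is precisely what the paper carries out with the one-parameter families of symplectic bases $e_i'=e_i,\ e_{i+n}'=\lambda e_i+e_{i+n}$ and $e_i''=e_i+\lambda e_{i+n}$. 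However, your parenthetical claim that for $n=1$ the vanishing $T=0$ is automatic is wrong. The $(Y,Z)$-symmetry of $T$, i.e.\ Lemma~\ref{Bi}$(i)$, says that $R(X,\xi)$ lies in $sp(\omega)$ (the endomorphisms whose associated bilinear form $\omega(S\,\cdot,\cdot)$ is \emph{symmetric}), so there is no clash with the antisymmetry of $\omega$; and Example 1 of the paper exhibits contact connections on $SO(3)$ (so $n=1$) with $R(E_1,\xi)E_1\neq 0$. Were your aside true, part $(i)$ would assert unconditional normality in dimension $3$, which is false. The error is not load-bearing --- the vary-$J$ argument works uniformly for all $n\geq 1$ --- but the aside should be deleted.

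For the ${\cal D}$-part you take a genuinely different route. The paper introduces $J^-=\frac{1}{2}(Id+iJ)$, shows that $N^{(1)}_1(X^h,Y^h)_J=0$ for $X,Y\in{\cal D}$ amounts to $R_{\cal D}(J^-X,J^-Y,J^-Z,J^-T)=0$ for every compatible $J$, and then quotes Lemma 1.1 of \cite{Vezz} for the equivalence of the latter with $R_{\cal D}$ being of Ricci type. Your plan --- $Sp(\omega)$-equivariance of $R_{\cal D}\mapsto\bigl(J\mapsto N^{(1)}_1(\cdot,\cdot)_J\bigr)$, vanishing on ${\mathscr S}^r$ by substituting (\ref{red}), and Schur's lemma on the irreducible module ${\mathscr S}^0$ reduced to a single nonvanishing check --- is a legitimate alternative that in effect reproves the quoted lemma rather than citing it; it buys independence from \cite{Vezz} at the cost of the two computations you explicitly defer. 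As written, those computations (the sufficiency check from (\ref{red}) and the nonvanishing on ${\mathscr S}^0$) constitute the substance of this half of the theorem and are not carried out, so this portion of your proposal is a sound plan rather than a completed proof.
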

\begin{proof}
By Proposition~\ref{N1}, $(\Phi_1,\xi^h,G_t)$ is a normal structure
if and only if $N^{(1)}_1(X^h,Y^h)_J=0$ for every$J\in{\cal C}$ and
$X,Y\in T_{\pi(J)}M$.

Note first that  $N^{(1)}_1(X^h,Y^h)_J$, with $J$, $X,Y$ fixed, is
a linear operator on $T_{\pi(J)}M$ whose value at $\xi_{\pi(J)}$
is zero.

Take a point $p\in M$. According to Proposition~\ref{N1},
$N^{(1)}_1(X^h,\xi^h)_J=0$ for $J\in{\cal C}_p$ and $X\in T_pM$,
if and only if for every $Z,T\in{\cal D}_p$ and $J\in{\cal C}_p$
\begin{equation}\label{Nor-2}
\begin{array}{c}
\omega(R(X,\xi)JZ,T)+\omega(R(X,\xi)Z,JT)\\[6pt]
-\omega(JX,\xi)JZ,JT)+\omega(R(JX,\xi)Z,T)=0.
\end{array}
\end{equation}
This obviously holds for $X=\xi$, so assume that $X\in{\cal D}_p$.
Let $e_{\alpha}$, $\alpha=1,\dots,2n$, be a symplectic basis of
${\cal D}_p$. It is convenient to set
$$
R_{\alpha,\beta,\gamma}=\omega(R(e_{\alpha},\xi)e_{\beta},e_{\gamma}).
$$
Then
\begin{equation}\label{abg}
R_{\alpha,\beta,\gamma}=R_{\alpha,\gamma,\beta}=R_{\beta,\alpha,\gamma}
\end{equation}
For $\lambda\in{\Bbb R}$,  consider the symplectic basis $e_i'=e_i,
e_{i+n}'=\lambda e_i+e_{i+n}$. Applying (\ref{Nor-2}) for the
complex structure $J'$ corresponding to this basis and $X=e_i',
Z=e_j', T=e_k'$, we obtain
\begin{equation}\label{e'}
\begin{array}{c}
R_{i,j,k}=0,\\[6pt]
 R_{i,j,k+n}+R_{i,j+n,k}+R_{i+n,j,k}=0,\>
R_{i,j+n,k+n}+R_{i+n,j,k+n}+R_{i+n,j+n,k}=0.
\end{array}
\end{equation}
Consider also the symplectic basis $e_i''=e_i+\lambda e_{i+n},
e_{i+n}''=e_{i+n}$. Setting $X=e_i'', Z=e_j'', T=e_k''$ in
(\ref{Nor-2}) and taking into account (\ref{e'}), we get
\begin{equation}\label{e''}
R_{i+n,j+n,k+n}=0,\quad R_{i+n,j+n,k}=0,\quad R_{i,j,k+n}=0.
\end{equation}
Now $R_{i,j+n,k}=R_{i,k,j+n}=0$ and
$R_{i,j+n,k+n}=R_{j+n,i,k+n}=R_{j+n,k+n,i}=0$ by (\ref{abg}) and
(\ref{e''}). Similarly, identities (\ref{abg}) and (\ref{e''})
imply $R_{i+n,j,k}=R_{i+n,j,k+n}=0$. It follows that
$R_{\alpha,\beta,\gamma}=0$ for every
$\alpha,\beta,\gamma=1,\dots,2n$. Therefore
$\omega(R(X,\xi)Z,T)=0$ for every $Z,T\in{\cal D}$. This implies
$R(X,\xi)Z=0$ since $R(X,\xi)Z\in{\cal D}$ and $\omega$ is
non-degenerate on ${\cal D}$.

Conversely, if $R(X,\xi)Z=0$ for every $Z$, identity (\ref{Nor-2})
is obviously satisfied, so  $N^{(1)}_1(X^h,\xi^h)=0$, $X\in TM$.

Next, we discuss the identity $N^{(1)}_1(X^h,Y^h)_J=0$ for $X,Y\in
{\cal D}_{\pi(J)}$. It is convenient to introduce the operator
$J^{-}=\frac{1}{2}(Id+iJ)$ on the complexification ${\cal D}^{\Bbb
C}$ of ${\cal D}$ and to extend $\omega$ to ${\cal D}^{\Bbb C}$ by
complex bilinearity. Then, taking into account Proposition~\ref{N1},
it is easy to check that $\omega(N^{(1)}_1(X^h,Y^h)_J(Z),T)=0$ for
$X,Y,Z,T\in {\cal D}_{\pi(J)}$, if and only if $R_{\cal
D}(J^{-}X,J^{-}Y,,J^{-}Z,J^{-}T)=0$. The latter condition is
equivalent to $R_{\cal D}$ being of Ricci type by \cite[Lemma
1.1]{Vezz}.

This proves the first part of the theorem.

In order to see that the structure $(\Phi_1,\xi^h,G_t)$ is not
normal, we fix a point $p\in M$ and take a symplectic basis
$E_1,\dots,E_{2n}$ of ${\cal D}_p$.  Let $J$ be the complex
structure on ${\cal D}_p$ for which $JE_{i}=E_{i+n}$,
$i=1,\dots,n$. Define a vertical vector $V_{1,2}$ of ${\cal C}$ at
$J$ by formula (\ref{Vij}). Then $V_{1,2}E_1=E_{2+n}$, so
$N^{(1)}_2(E_1^h,V_{1,2})_J=-2(E_2^h)_J\neq 0$, by
Proposition~\ref{N1}.
\end{proof}

\smallskip

\noindent {\it Remark 5}.  Note that, by Corollary~\ref{Killing},
the condition $R(X,\xi)Y=0$ for every $X,Y$ means that the vector
field $\xi^h$ on $({\cal C},G_t)$ is Killing.

\section{Almost $CR$-structures on contact twistor space}

In this section we shall show that the almost $CR$-structure $({\cal
E},\Phi_2|{\cal E})$ is not integrable, reproving in passing the
integrability result of \cite{Vezz} for $({\cal E},\Phi_1|{\cal
E})$; here ${\cal E}$ is the bundle over ${\cal C}$ whose fibre at a
point $J\in{\cal C}$ is the space ${\cal V}_J\oplus
\{X^h_J:~X\in{\cal D}_{\pi(J)}\}=({\Bbb R}\xi_{\pi(J)})^{\perp}$,
the orthogonal complement being with respect to the metric $G_t$.

Recall that an almost  Cauchy-Riemann ($CR$) structure  on a
manifold $N$ is a pair $({\cal E},\Phi)$ of a subbundle ${\cal E}$
of the tangent bundle $TN$ and an almost complex structure $\Phi$ of
the bundle ${\cal E}$. For any two sections $X,Y$ of ${\cal E}$, the
value of $[X,Y] \> mod\, {\cal E}$ at a point $p\in N$ depends only
on the values of $X$ and $Y$ at $p$, so we have a skew-symmetric
bilinear form ${\cal L}: {\cal E}\times {\cal E}\to TN/{\cal E}$
defined by ${\cal L}(X,Y)=[X,Y] \> mod\, {\cal E}$ and called the
Levi form of the $CR$-structure $({\cal E},\Phi)$. If the Levi form
is $\Phi$-invariant, we can define the Nijenhuis tensor of the
$CR$-structure $({\cal E},\Phi)$ by
$$
N^{\it CR}(X,Y)=-[X,Y]+[\Phi X,\Phi Y]-\Phi([\Phi X,Y]+[X,\Phi
Y]).
$$
The value of this tensor at a point $p\in N$ lies in ${\cal E}$ and
depends only on the values of the sections $X,Y$ at $p$. An almost
$CR$-structure is said to be integrable if its Levi form is
$\Phi$-invariant and the Nijenhuis tensor vanishes. Let ${\cal
E}^{\Bbb C}={\cal E}^{1,0}\oplus {\cal E}^{0,1}$ be the
decomposition of the complexification of ${\cal E}$ into $(1,0)$ and
$(0,1)$ parts with respect to $\Phi$. If the $CR$-structure $({\cal
E},\Phi)$ is integrable, then the bundle ${\cal E}^{1,0}$ satisfies
the following two conditions:
$$
{\cal E}^{1,0}\cap \overline{{\cal E}^{1,0}}=0, \> [\Gamma({\cal
E}^{1,0}),\Gamma({\cal E}^{1,0})]\subset\Gamma({\cal E}^{1,0})
$$
where $\Gamma({\cal E}^{1,0})$ stands for the space of smooth
sections of ${\cal E}^{1,0}$. Conversely, suppose we are given a
complex subbundle $B$ of the complexified tangent bundle $T^{\Bbb
C}N$ such that $B\cap\overline B=0$ and
$[\Gamma(B),\Gamma(B)]\subset\Gamma(B)$ (many authors call a bundle
with these properties "$CR$-structure"). Set ${\cal E}=\{X\in TN:
X=Z+\bar Z\> \textrm{ for some (unique)} \> Z\in B\}$ and put $\Phi
X=-2{\it Im}Z$ for $X\in {\cal E}$. Then $({\cal E},\Phi)$ is an
integrable $CR$-structure such that ${\cal E}^{1,0}=B$.

\smallskip

Let ${\cal L}_k$ be the Levi form of the almost $CR$-structure
$({\cal E},\Phi_k)$, $k=1,2$.

\begin{lemma}\label{Levi}
Let $J\in{\cal C}$, $X,Y\in {\cal D}_{\pi(J)}$ and $U,V\in{\cal
V}_J$. Then
$$ {\cal L}_{k}(X^h_J,Y^h_J)=-\omega(X,Y), \quad {\cal L}_{k}(U,V)=0,
\quad {\cal L}_{k}(X^h,V)=0.$$
\end{lemma}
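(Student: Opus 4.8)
The plan is to compute the three brackets directly. First I would observe that since ${\cal E}=Im\,\Phi_1=Im\,\Phi_2$ is \emph{the same} subbundle of $T{\cal C}$ for both values of $k$, the two Levi forms coincide, so it suffices to treat a single form ${\cal L}={\cal L}_1={\cal L}_2$. Recall that ${\cal E}_J={\cal V}_J\oplus\{Z^h_J:Z\in{\cal D}_{\pi(J)}\}$ is the orthogonal complement of ${\Bbb R}\xi^h_J$ in $T_J{\cal C}$; hence $T{\cal C}/{\cal E}$ is the line spanned by the class of $\xi^h$, and ${\cal L}(\cdot\,,\cdot)$ is to be read off as the coefficient of $\xi^h_J$ in the relevant bracket taken modulo ${\cal E}$. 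In each case I would extend the given vectors to sections of ${\cal E}$ and use that the value of ${\cal L}$ at $J$ depends only on the point-values.

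For two vertical vectors $U,V\in{\cal V}_J$ I would extend them to vertical vector fields. The fibres of $\pi:{\cal C}\to M$ are submanifolds, so the vertical distribution is integrable and the bracket of two vertical fields is again vertical; thus $[U,V]\in{\cal V}\subset{\cal E}$ and ${\cal L}(U,V)=0$. For a horizontal--vertical pair $X\in{\cal D}_{\pi(J)}$, $V\in{\cal V}_J$, I would choose a section $A$ of $End({\cal D})$ with $A_p=V$, form the vertical field $\widetilde A$ of (\ref{Atilde}), and apply Lemma~\ref{h-v}$(i)$: the bracket $[X^h,\widetilde A]_J=(\widetilde{\nabla_X A})_J$ is vertical, hence lies in ${\cal E}_J$, so ${\cal L}(X^h_J,V)=0$.

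For two horizontal lifts I would extend $X,Y\in{\cal D}_{\pi(J)}$ to sections of ${\cal D}$, so that $X^h,Y^h$ become sections of ${\cal E}$, and apply Lemma~\ref{curv} with $a=b=0$, giving $[X^h,Y^h]_J=[X,Y]^h_J+R_p(X,Y)J$. The curvature term $R_p(X,Y)J$ is vertical: by Lemma~\ref{Bi}$(i)$ the endomorphism $R_p(X,Y)$ lies in $sp(\omega)$, so $R_p(X,Y)J$ is $\omega$-skew and anticommutes with $J$, i.e. it belongs to $T_J{\cal Z}={\cal V}_J\subset{\cal E}_J$, and therefore drops out modulo ${\cal E}$. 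Writing $[X,Y]=[X,Y]_{\cal D}+\alpha([X,Y])\xi$ with $[X,Y]_{\cal D}\in{\cal D}$, the horizontal lift of the ${\cal D}$-part is again in ${\cal E}_J$, while the remaining contribution is $\alpha([X,Y])\xi^h_J$. Since $\alpha(X)=\alpha(Y)=0$ and $\omega=d\alpha$, I get $\alpha([X,Y])=-d\alpha(X,Y)=-\omega(X,Y)$, whence ${\cal L}(X^h_J,Y^h_J)=-\omega(X,Y)$.

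These computations are essentially routine; the only steps requiring any care are the bookkeeping modulo ${\cal E}$. Concretely, the points to get right are that the curvature term $R_p(X,Y)J$ is vertical (so that it is absorbed into ${\cal E}$) and that the sole surviving contribution in the horizontal--horizontal case is the $\xi^h$-component $\alpha([X,Y])$, where the identity $\omega=d\alpha$ fixes both the value and the sign. I would expect the verticality of $R_p(X,Y)J$ to be the one conceptual point worth spelling out, everything else being immediate from Lemmas~\ref{h-v} and \ref{curv}.
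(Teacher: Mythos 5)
Your proof is correct, and for the vertical--vertical and horizontal--vertical cases it is the same as the paper's (both reduce to the observation that $[U,V]$ and $[X^h,\widetilde A]$ are vertical, the latter via Lemma~\ref{h-v}$(i)$). For the horizontal--horizontal case, however, you take a genuinely different route. The paper identifies $T_J{\cal C}/{\cal E}_J$ with ${\Bbb R}\xi^h_J$ via the metric and computes ${\cal L}_k(X^h_J,Y^h_J)=G_t([X^h,Y^h],\xi^h)_J=G_t(D_{X^h}Y^h-D_{Y^h}X^h,\xi^h)_J$, reading off the answer from the Koszul-formula computation in Lemma~\ref{Dhh}. You instead work directly with the bracket formula of Lemma~\ref{curv}, discard the curvature term after checking it lands in ${\cal V}_J$ (your verification that $R_p(X,Y)\in sp(\omega)$ forces $R_p(X,Y)J$ to be $\omega$-skew, to anticommute with $J$, and to kill $\xi$ is exactly the point that needs spelling out, and it is right; alternatively it follows at once from $[X^h,Y^h]_J\in T_J{\cal C}$ and $\pi_*[X^h,Y^h]_J=[X,Y]_p$), and then extract the $\xi$-component of $[X,Y]$ from the torsion identity (\ref{tor}), giving $\alpha([X,Y])=-d\alpha(X,Y)=-\omega(X,Y)$. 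Your argument is the more elementary of the two --- it never invokes the metric $G_t$ or the Levi-Civita connection and needs no special choice of extensions of $X,Y$ --- while the paper's version recycles Lemma~\ref{Dhh}, which it has already established for other purposes. The signs agree, and the two identifications of the quotient line coincide since $G_t(\xi^h,\xi^h)=1$.
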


\begin{proof} Extend $X$ and $Y$ to sections of ${\cal D}$ near the point
$p=\pi(J)$ such that $\nabla X|_p=\nabla Y|_p=0$. Then, by
Lemma~\ref{Dhh},
$$
{\cal
L}_k(X^h_J,Y^h_J)=G_t([X^h,Y^h],\xi^h)_J=G_t(D_{X^h}Y^h-D_{Y^h}X^h,\xi^h)_J=-\omega_p(X,Y).
$$
Extend $U,V$ to vertical vector fields. The vector fields $[U,V]$
and $[X^h,V]$ are vertical, hence $ {\cal L}_k(U,V)={\cal
L}_k(X^h,V)=0$.
\end{proof}

\begin{cotmb}
The Levy form ${\cal L}_k$ is $\Phi_k$-invariant, $k=1,2$.
\end{cotmb}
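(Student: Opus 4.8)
The plan is to verify the $\Phi_k$-invariance directly on pairs of tangent vectors drawn from the horizontal and vertical summands of $\mathcal{E}_J$, since Lemma~\ref{Levi} already records the values of $\mathcal{L}_k$ on exactly such pairs. The key preliminary observation is that $\Phi_k$ preserves the decomposition $\mathcal{E}_J=\{X^h_J:X\in{\cal D}_{\pi(J)}\}\oplus{\cal V}_J$: it sends the horizontal vector $X^h_J$ to the horizontal vector $(JX)^h_J$, and $JX$ again lies in ${\cal D}_{\pi(J)}$ whenever $X\in{\cal D}_{\pi(J)}$, while it maps ${\cal V}_J$ into itself by $V\mapsto(-1)^{k+1}JV$ (one checks $JV$ anticommutes with $J$ and is $\omega$-skew, so $JV\in{\cal V}_J$). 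Consequently, applying $\Phi_k$ to a horizontal-horizontal pair again yields a horizontal-horizontal pair, and likewise for the mixed and purely vertical pairs.

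For the horizontal-horizontal case I would compute, using the definition of $\Phi_k$ and Lemma~\ref{Levi},
$$
{\cal L}_k(\Phi_k X^h_J,\Phi_k Y^h_J)={\cal L}_k((JX)^h_J,(JY)^h_J)=-\omega(JX,JY),
$$
and then invoke the compatibility \eqref{om-J} of $J$ with $\omega$, namely $\omega(JX,JY)=\omega(X,Y)$, to conclude that this equals $-\omega(X,Y)={\cal L}_k(X^h_J,Y^h_J)$. This is the only case carrying any genuine content, and even it reduces to a single defining identity of the twistor space.

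The remaining two cases are immediate from Lemma~\ref{Levi}. For $U,V\in{\cal V}_J$ both ${\cal L}_k(U,V)$ and ${\cal L}_k(\Phi_k U,\Phi_k V)$ vanish, the latter because $\Phi_k U$ and $\Phi_k V$ remain vertical; and for a mixed pair $X^h_J,V$ both ${\cal L}_k(X^h_J,V)$ and ${\cal L}_k(\Phi_k X^h_J,\Phi_k V)={\cal L}_k((JX)^h_J,(-1)^{k+1}JV)$ vanish, since the second pair is again horizontal-vertical. Because ${\cal L}_k$ is tensorial and $\Phi_k$ is an endomorphism of $\mathcal{E}$, bilinearity lets me reduce an arbitrary pair of sections of $\mathcal{E}$ to these three types, so $\Phi_k$-invariance follows on all of $\mathcal{E}$ for $k=1,2$. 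I expect no real obstacle: the whole statement collapses to $\omega(JX,JY)=\omega(X,Y)$ together with the vanishing of the Levi form off the horizontal-horizontal block.
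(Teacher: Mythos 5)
Your proof is correct and matches the paper's (implicit) argument: the paper states this corollary as an immediate consequence of Lemma~\ref{Levi}, and the content you supply --- that $\Phi_k$ preserves the horizontal/vertical splitting of ${\cal E}_J$, that the only nonzero block of ${\cal L}_k$ is the horizontal--horizontal one, and that there invariance reduces to $\omega(JX,JY)=\omega(X,Y)$ from (\ref{om-J}) --- is exactly the intended justification.
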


Denote the Nijenhuis tensor of the $CR$-structure $({\cal
E},\Phi_k)$ by $N^{\it CR}_k$.

\begin{prop}\label{NCR}
If $P,Q\in {\cal E}$, then $ N^{\it CR}_k(P,Q)=N^{(1)}_k(P,Q)$.
\end{prop}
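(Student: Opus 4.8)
The plan is to compare the two tensors $N^{CR}_k$ and $N^{(1)}_k$ directly on sections of ${\cal E}$, exploiting the fact that they are built from almost identical combinations of Lie brackets. Recall that the normality tensor reads
$$
N^{(1)}_k(P,Q)=\Phi_k^2[P,Q]+[\Phi_kP,\Phi_kQ]-\Phi_k[\Phi_kP,Q]-\Phi_k[P,\Phi_kQ]+d\eta_t(P,Q)\xi^h,
$$
while the $CR$-Nijenhuis tensor reads
$$
N^{CR}_k(P,Q)=-[P,Q]+[\Phi_kP,\Phi_kQ]-\Phi_k[\Phi_kP,Q]-\Phi_k[P,\Phi_kQ].
$$
First I would observe that the last three terms are literally identical in the two expressions, so the difference reduces to comparing $\Phi_k^2[P,Q]+d\eta_t(P,Q)\xi^h$ with $-[P,Q]$. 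The heart of the matter is therefore to show that these two quantities agree whenever $P,Q$ are sections of ${\cal E}$.

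The key input is that $\Phi_k$ is an $f$-structure satisfying $\Phi_k^2=-Id+\eta_t\otimes\xi^h$ on all of $T{\cal C}$; equivalently $\Phi_k^2 R=-R+\eta_t(R)\xi^h$ for any vector field $R$. Applying this with $R=[P,Q]$ gives $\Phi_k^2[P,Q]=-[P,Q]+\eta_t([P,Q])\xi^h$. Thus the discrepancy between the two tensors is
$$
N^{(1)}_k(P,Q)-N^{CR}_k(P,Q)=\eta_t([P,Q])\xi^h+d\eta_t(P,Q)\xi^h.
$$
So everything comes down to verifying that $\eta_t([P,Q])+d\eta_t(P,Q)=0$ for $P,Q\in{\cal E}$. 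Since $\eta_t(R)=G_t(R,\xi^h)$ is the contact form dual to $\xi^h$ and both $P,Q$ are orthogonal to $\xi^h$ (they lie in ${\cal E}=({\mathbb R}\xi^h)^\perp$), I expect the standard identity $d\eta_t(P,Q)=P(\eta_t(Q))-Q(\eta_t(P))-\eta_t([P,Q])$ to collapse: the first two terms vanish because $\eta_t(P)=\eta_t(Q)=0$ identically along sections of ${\cal E}$, leaving $d\eta_t(P,Q)=-\eta_t([P,Q])$, which is exactly the cancellation required. (Here one must keep track of the paper's convention that its $d\eta$ is twice the standard one, but the factor is harmless because the vanishing is of the form $a+(-a)=0$.)

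The main obstacle, and the only genuinely delicate point, is the bookkeeping around the normalization of $d\eta_t$ and the precise relation $\Phi_k^2=-Id+\eta_t\otimes\xi^h$, which must be checked against the definitions of $\Phi_k$ and $G_t$ given earlier: one needs that $\eta_t=\alpha\circ\pi_*$ composed appropriately picks out exactly the $\xi^h$-component, and that Corollary~\ref{d-eta} values of $d\eta_t$ are consistent with the exterior-derivative formula on ${\cal E}$. Once this is settled, the conclusion $N^{CR}_k(P,Q)=N^{(1)}_k(P,Q)$ on ${\cal E}$ is immediate. I would close by noting that the identity need only be verified for $P,Q$ ranging over the spanning types $X^h$ (with $X\in{\cal D}$) and $V\in{\cal V}_J$, and that in all these cases the bracket $[P,Q]$ and its $\xi^h$-component are already computed in Lemma~\ref{Levi}, so no new calculation is needed beyond assembling the pieces.
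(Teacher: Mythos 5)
Your proof is correct, but it proceeds quite differently from the paper's. The paper establishes the identity component by component: it uses Lemma~\ref{curv} and (\ref{tor}) to show that the horizontal parts of $N^{\it CR}_k(X^h,Y^h)$ and $N^{(1)}_k(X^h,Y^h)$ both vanish, that their vertical parts agree, and then treats the mixed and purely vertical cases via Lemma~\ref{h-v} and the fibrewise complex structure. You instead observe that the two defining formulas differ only in the terms $\Phi_k^2[P,Q]+d\eta_t(P,Q)\xi^h$ versus $-[P,Q]$, and that the almost contact identity $\Phi_k^2=-\mathrm{Id}+\eta_t\otimes\xi^h$ (which does hold here: $\eta_t(X^h)=\alpha(X)$, $\eta_t|{\cal V}=0$, and $J^2X=-X+\alpha(X)\xi$ gives exactly this on horizontal lifts, while $\Phi_k^2=-\mathrm{Id}$ on verticals) reduces the discrepancy to $\big(\eta_t([P,Q])+d\eta_t(P,Q)\big)\xi^h$; extending $P,Q$ to sections of ${\cal E}=\ker\eta_t$ and using the intrinsic formula for $d\eta_t$ (in the paper's normalization, $d\eta_t(P,Q)=P\eta_t(Q)-Q\eta_t(P)-\eta_t([P,Q])$, consistent with Corollary~\ref{d-eta}) kills this term. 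This is legitimate because both $N^{(1)}_k$ and $N^{\it CR}_k$ are tensorial, the latter thanks to the $\Phi_k$-invariance of the Levi form established after Lemma~\ref{Levi}, and because $\Phi_k\xi^h=0$ lets you apply the global $\Phi_k$ term by term. Your argument is in fact a general statement about any almost contact metric structure and its induced $CR$-structure on $\ker\eta$, requiring none of the twistorial bracket computations; what the paper's route buys instead is the explicit horizontal/vertical formulas that are reused directly in Theorem~\ref{intCR}. Only your closing remark is superfluous: no case-by-case verification over the spanning types is needed, since your cancellation is uniform.
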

\begin{proof} Let $J\in{\cal C}$ and let $X$, $Y$ be sections of ${\cal D}$ near the point
$p=\pi(J)$. Using Lemma~\ref{curv} and identity (\ref{tor}), we
easily see that
$$
\begin{array}{c}
{\cal H}N^{\it CR}_k(X^h,Y^h)_J=\\[6pt]
\big(-S_p(\nabla_{X}S)_p(Y)+S_p(\nabla_{Y}S)_p(X)+(\nabla_{SX}S)_p(Y)-(\nabla_{SY}S)_p(X)\big)^h_J\\[6pt]
+ \big[\omega(X_p,Y_p)-\omega(SX_p,SY_p)\big]\xi^h_J.
\end{array}
$$
Therefore,
$$
{\cal H}N^{\it CR}_k(X^h,Y^h)_J=0={\cal H}N^{(1)}_k(X^h,Y^h)_J
$$
by Proposition~\ref{N1}. Lemma~\ref{curv} implies also that ${\cal
V}N^{\it CR}_k(X^h,Y^h)_J={\cal V}N^{(1)}_k(X^h,Y^h)_J$. It
follows from Lemma~\ref{h-v} and Proposition~\ref{N1} that $N^{\it
CR}_k(X^h,U)_J=N^{(1)}_k(X^h,U)_J$ for every $U\in{\cal V}_J$.
Finally, if $U,V\in {\cal V}_J$, then $N^{\it
CR}_k(P,Q)=N^{(1)}_k(P,Q)=0$ since $\Phi_k$ is a complex structure
on the fibres of ${\cal C}$.
\end{proof}

Proposition~\ref{NCR} and the proof of Theorem~\ref{nor} give the
following.
\begin{tw}\label{intCR} $(i)$ {\rm{(\cite{Vezz})}} The almost $CR$ structure $({\cal E},\Phi_1)$
is integrable if and only if  the tensor $R_{\cal D}$ is of Ricci
type.

\noindent $(ii)$ The almost $CR$ structure $({\cal E},\Phi_2)$ is
never integrable.
\end{tw}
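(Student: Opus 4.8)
The plan is to reduce everything to the already-computed Nijenhuis tensor $N^{(1)}_k$ and the analysis carried out in the proof of Theorem~\ref{nor}. By Proposition~\ref{NCR}, the $CR$-Nijenhuis tensor $N^{\it CR}_k$ agrees with $N^{(1)}_k$ on sections of ${\cal E}$, and the Levi form ${\cal L}_k$ is $\Phi_k$-invariant (the Corollary following Lemma~\ref{Levi}). Hence integrability of $({\cal E},\Phi_k)$ is equivalent to the vanishing of $N^{(1)}_k(P,Q)$ for all $P,Q\in{\cal E}_J$ and all $J\in{\cal C}$. The crucial structural observation is that ${\cal E}_J={\cal V}_J\oplus\{X^h_J:X\in{\cal D}_{\pi(J)}\}$ contains only horizontal lifts of vectors in the contact distribution, never $\xi^h$; this is exactly what separates $CR$-integrability from normality.

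For part $(i)$, I would invoke Proposition~\ref{N1} to dispose of the mixed and vertical components: $N^{(1)}_1(V,W)=0$ always, and since $1+(-1)^{1}=0$ the mixed term $N^{(1)}_1(X^h,V)$ vanishes identically as well. Thus only the purely horizontal condition $N^{(1)}_1(X^h,Y^h)_J=0$ with $X,Y\in{\cal D}_{\pi(J)}$ survives. This is precisely the condition treated in the proof of Theorem~\ref{nor}: passing to the operator $J^{-}=\frac{1}{2}(Id+iJ)$ on ${\cal D}^{\Bbb C}$, one checks that it is equivalent to $R_{\cal D}(J^{-}X,J^{-}Y,J^{-}Z,J^{-}T)=0$, which by \cite[Lemma 1.1]{Vezz} holds if and only if $R_{\cal D}$ is of Ricci type. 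The point worth stressing is why the additional normality hypothesis $R(X,\xi)Y=0$ does not appear here: in Theorem~\ref{nor} it arose solely from the components $N^{(1)}_1(\xi^h,Y^h)$, and since $\xi^h\notin{\cal E}$ these components are never tested.

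For part $(ii)$, the obstruction is the mixed term. For $k=2$, Proposition~\ref{N1} gives $N^{(1)}_2(X^h,V)=2(JVX)^h_J$, and since $X^h$ (for $X\in{\cal D}_{\pi(J)}$) and $V$ both lie in ${\cal E}$, a single nonzero value of this expression already defeats integrability. I would reuse the explicit example from the proof of Theorem~\ref{nor}$(ii)$: fix a symplectic basis $E_1,\dots,E_{2n}$ of ${\cal D}_p$, let $J$ be the associated complex structure with $JE_i=E_{i+n}$, and form the vertical vector $V_{1,2}$ via (\ref{Vij}). Then $V_{1,2}E_1=E_{2+n}$, hence $JV_{1,2}E_1=-E_2$ and $N^{(1)}_2(E_1^h,V_{1,2})_J=-2(E_2^h)_J\neq 0$. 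Both arguments lie in ${\cal E}$, whence $N^{\it CR}_2$ is nonzero and $({\cal E},\Phi_2)$ can never be integrable.

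I expect no serious obstacle here, since the two genuinely computational inputs --- the formula for $N^{(1)}_k$ in Proposition~\ref{N1} and the Ricci-type reduction via \cite[Lemma 1.1]{Vezz} --- are already in hand from the proof of Theorem~\ref{nor}. The only care required is bookkeeping: ensuring that the $\xi^h$ direction is correctly excluded from ${\cal E}$, so that the $R(\cdot\,,\xi)\cdot$ condition genuinely drops out in $(i)$, and that the two vectors in the counterexample really belong to ${\cal E}$, so that the mixed term legitimately obstructs integrability in $(ii)$.
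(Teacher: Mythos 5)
Your proposal is correct and follows exactly the route the paper takes: the paper's entire proof is the one-line remark that Proposition~\ref{NCR} together with the proof of Theorem~\ref{nor} yields the result, and your write-up simply unpacks that — the Levi form is $\Phi_k$-invariant, $N^{\it CR}_k=N^{(1)}_k$ on ${\cal E}$, the $R(\cdot\,,\xi)\cdot$ condition drops out because $\xi^h\notin{\cal E}$, and the nonvanishing mixed term $N^{(1)}_2(E_1^h,V_{1,2})_J$ kills integrability for $k=2$. No gaps; this matches the paper's argument in every essential step.
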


\section{Examples}

\noindent {\it Example 1}. Let $E_1,E_2,E_3$ be left-invariant
vector fields on the group $SO(3)$ such that
$$
[E_1,E_2]=E_3,\quad [E_2,E_3]=E_1,\quad [E_3,E_1]=E_2.
$$
If $E_1^{\ast},E_2^{\ast},E_3^{\ast}$ is the dual frame, set
$\alpha=-E_3^{\ast}$. Then
$$
d\alpha(E_1,E_2)=1,\quad d\alpha(E_1,E_3)=d\alpha(E_2,E_3)=0.
$$
Thus $\alpha$ is a contact form on $SO(3)$ with contact distribution
${\cal D}=span\{E_1,E_2\}$ and Reeb vector field $\xi=-E_3$.

It is easy to check that for every contact connection the tensor
$R_{\cal D}$ is of Ricci type.

A simple example of a contact connection $\nabla$ can be found
setting
$$
\nabla_{E_i}E_1=a_iE_1+b_iE_2,\quad
\nabla_{E_i}E_2=c_iE_1+d_iE_2,\quad i=1,2,
$$
where $a_i, b_i, c_i, d_i\in{\Bbb R}$. Then the identities
$$(\nabla_{E_1}d\alpha)(E_1,E_2)=(\nabla_{E_2}d\alpha)(E_1,E_2)=0$$
are equivalent to
\begin{equation}\label{cc1}
a_1=-d_1,\quad a_2=-d_2.
\end{equation}
We have
$$
[E_1,E_2]=\nabla_{E_1}E_2-\nabla_{E_2}E_1+\alpha\big([E_1,E_2]\big)\xi,
\quad i,j=1,2,
$$
if and only if
\begin{equation}\label{cc2}
a_2=c_1,\quad b_2=d_1.
\end{equation}
Suppose that identities (\ref{cc1}) and (\ref{cc2}) are satisfied
and set $\nabla_{\xi}E_i=[\xi,E_i]$, $i=1,2$, and $\nabla\xi=0$.
Then $\nabla$ is a contact connection for which
$$
\begin{array}{c}
R(E_1,\xi)E_1=(2a_2+b_1)E_1+3d_1E_2,\\[6pt]
R(E_1,\xi)E_2=(c_2+2d_1)E_1-(b_1-2d_2)E_2\\[6pt]
R(E_2,\xi)E_2=3d_2E_1-(2d_1+c_2)E_2.
\end{array}
$$
Thus $R(X,\xi)Y=0$ for $X,Y\in{\cal D}$ if and only if
$a_i=b_i=c_i=d_i=0$, $i=1,2$.

\medskip

\noindent {\it Example 2}. Let $G$ be the simply connected
$5$-dimensional Lie group with Lie algebra generated by
left-invariant vector fields $E_1,\dots,E_5$ whose non-zero Lie
brackets are
$$
[E_2,E_3]=E_1,\quad [E_2,E_5]=E_2,\quad [E_3,E_5]=-E_3,\quad
[E_4,E_5]=E_1.
$$
One can easily see that the group $G$ is solvable. If
$E_1^{\ast},\dots,E_5^{\ast}$ is the dual frame of left-invariant
$1$-forms, $E_1^{\ast}$ is a contact form on $G$. It is shown in
\cite{DF} that $G$ admits a lattice $\Gamma$ such that the
quotient $G/\Gamma$ is compact (recall that such a lattice is
called uniform). In fact, $G$ is one of the groups in the list,
obtained in \cite{DF}, of all simply connected solvable
$5$-dimensional Lie groups admitting a left-invariant contact form
and a uniform lattice.

Let $s\neq 0$ be a real number. Then $\alpha=sE_1^{\ast}+E_4^{\ast}$
is a contact form on $G$ with contact distribution  ${\cal
D}=span\{E_2,E_3,E_4-\displaystyle{\frac{1}{s}}E_1,A_5\}$ and Reeb
field $\xi=\displaystyle{\frac{1}{s}}E_1$. It is convenient to set
$$
A_1=E_2,\quad A_2=E_3,\quad A_3=E_4-\displaystyle{\frac{1}{s}}E_1,
\quad A_4=E_5, \quad A_5=\xi.
$$
We have the following table for the non-zero Lie brackets of the
vector fields $A_1,\dots,A_5$
$$
[A_1,A_2]=sA_5,\quad [A_1,A_4]=A_1,\quad [A_2,A_4]=-A_2,\quad
[A_3,A_4]=sA_5.
$$

The only non-zero values of the form $\omega=d\alpha$ are
$$
\omega(A_1,A_2)=\omega(A_3,A_4)=-s.
$$

\smallskip

 Let $\nabla'$ be the connection on $G$ for which
$$
\begin{array}{c}
\nabla'_{A_i}A_j=\frac{1}{2}([A_i,A_j]-\alpha([A_i,A_j])\xi),\quad
i,j=1,\dots,4,\\[6pt]
\nabla'_{A_5}A_i=[A_5,A_i],\quad i=1,\dots,4,\quad \nabla' A_5=0.
\end{array}
$$
This connection satisfies all conditions in the definition of a
contact connection except the condition (\ref{nda}) (for example,
$(\nabla'_{A_1}\omega)(A_2,A_4)=-\frac{s}{2}\neq 0$). To get a
contact connection we follow the procedure used in the proof of
\cite[Theorem 2.5]{Vezz}. If ${\cal N}$ is the tensor on ${\cal D}$
defined by
$$
\omega({\cal N}(X,Y),Z)=(\nabla'_{X}\omega)(Y,Z),\quad X,Y,Z\in{\cal
D},
$$
set
$$
\begin{array}{c}
\widetilde\nabla_{X}{Y}=\nabla'_{X}{Y}+\displaystyle{\frac{1}{3}}{\cal
N}(X,Y)+\displaystyle{\frac{1}{3}}{\cal N}(Y,X)~~\mbox{for}~~X,Y\in{\cal D},\\[8pt]
\widetilde\nabla_{\xi}X=[\xi,X]~~\mbox{for}~~X\in{\cal D}, \quad
\widetilde\nabla\xi=0.
\end{array}
$$
Then $\widetilde\nabla$ is a contact connection.

In our case, the non-zero values of ${\cal N}$ are
$$
\begin{array}{c}
{\cal N}(A_1,A_2)={\cal
N}(A_2,A_1)=\displaystyle{\frac{1}{2}}A_3,\quad {\cal
N}(A_1,A_4)=-\displaystyle{\frac{1}{2}}A_1,\\[8pt]
{\cal N}(A_2,A_4)=\displaystyle{\frac{1}{2}}A_2.
\end{array}
$$
Thus we have the following table for the connection
$\widetilde\nabla$
$$
\widetilde\nabla_{A_1}A_2=\frac{1}{3}A_3,\quad
\widetilde\nabla_{A_1}A_4=\frac{1}{3}A_1,\quad
\widetilde\nabla_{A_2}A_1=\frac{1}{3}A_3,\quad
\widetilde\nabla_{A_2}A_4=-\frac{1}{3}A_2
$$
$$
\widetilde\nabla_{A_4}E_1=-\frac{2}{3}A_1,\quad
\widetilde\nabla_{A_4}E_2=\frac{2}{3}A_2
$$
and all other $\widetilde\nabla_{A_i}A_j$ vanish.

The tensor $R_{\cal D}$ for this connection is not of Ricci type
(identity (\ref{red}) with $P=\sigma$ is not satisfied for
$X=A_1,Y=A_4,Z=A_4,U=A_4$). According to {\cite[Theorem 2.5]{Vezz}
every contact connection $\widetilde\nabla$ is of the form
$$
\nabla_{X}Y=\widetilde\nabla_{X}Y+S(X,Y)
$$
where  $S(X,Y)$ is a tensor with the following properties:
\begin{itemize}
\item[(a)]\quad $S(X,Y)$ takes its values in
${\mathcal D}$;
\item[(b)]\quad $S(X,\xi)=S(\xi,X)=0$ for every $X$;
\item[(c)]\quad $S(X,Y)=S(Y,X)$;
\item[(d)]\quad $\omega(S(X,Y),Z)$ is a symmetric $3$-tensor.
\end{itemize}
In order to find a tensor $S$ such that the curvature $R_{\cal D}$
of the corresponding connection $\nabla$ is of Ricci type, we have
used a computer computation. This suggested the following simple
choice of $S$
$$
\begin{array}{c}
S(A_1,A_2)=S(A_2,A_1)=-\displaystyle{\frac{1}{3}}A_3,\quad
S(A_1,A_4)=S(A_4,A_1)=-\displaystyle{\frac{1}{3}}A_1,\\[6pt]
S(A_2,A_4)=S(A_4,A_2)=\displaystyle{\frac{1}{3}}A_2,
\end{array}
$$
all other $S(A_i,A_j)=0$. Under that choice of $S$ we get a flat
contact connection $\nabla$ for which
$$
\nabla_{A_4}A_1=-A_1,\quad \nabla_{A_4}A_2=A_2,\quad
\nabla_{A_i}A_j=0~\,\mbox{for}~ (i,j)\neq (4,1), (4,2).
$$

\medskip

\noindent {\it Example 3}. As in \cite[Example 2]{Dia}, consider
a $5$-dimensional Lie group $G$ with Lie algebra generated by
left-invariant vector fields $E_0,E_1,\dots,E_4$ whose non-zero
Lie brackets are
$$
\begin{array}{c}
[E_0,E_1]=-E_1,\quad [E_0,E_2]=E_2,\quad [E_1,E_2]=E_3,\\[6pt]
[E_1,E_4]=-E_1,\quad  [E_3,E_4]=-E_3
\end{array}
$$
Let $s\neq 0$ be a real number. If
$E_0^{\ast},E_1^{\ast},\dots,E_4^{\ast}$ is the dual frame of
left-invariant $1$-forms, $\alpha=E_3^{\ast}+sE_0^{\ast}$ is a
contact form on $G$ with contact distribution  ${\cal
D}=span\{E_1,E_2,E_4,sE_3-E_0\}$  and Reeb field
$\xi=\displaystyle{\frac{1}{s}}E_0$. Set
$$
A_1=E_1,\quad A_2=E_2,\quad A_3=E_4, \quad A_4=sE_3-E_0, \quad
A_5=\xi.
$$
Considerations similar to that in the preceding example lead to
the following non-flat contact connection $\nabla$ whose curvature
$R$ satisfies the identity $R(X,\xi)Y=0$ for every $X,Y$ and the
tensor $R_{\cal D}$ is of Ricci type (thus, the almost contact
metric structure $(\Phi_1,\xi^h,G_t)$ is normal):
$$
\nabla_{A_1}A_2=\frac{1}{2s}A_4,\quad
\nabla_{A_1}A_3=-\frac{1}{2}A_1,\\[6pt]
$$
$$
\nabla_{A_2}A_1=-\frac{1}{2s}A_4,\quad
\nabla_{A_2}A_3=-\frac{1}{2}A_2,\\[6pt]
$$
$$
\nabla_{A_3}A_1=\frac{1}{2s}A_1,\quad
\nabla_{A_3}A_2=-\frac{1}{2}A_2,\quad \nabla_{A_3}A_4=-2A_3,\\[6pt]
$$
$$
\nabla_{A_4}A_1=A_1,\> \nabla_{A_4}A_2=-A_2,\>
\nabla_{A_4}A_3=-2A_3-A_4,\>\nabla_{A_4}A_4=8A_3+2A_4,\\[6pt]
$$
$$
\nabla_{A_5}A_1=-\frac{1}{s}A_1,\quad
\nabla_{A_5}A_2=\frac{1}{s}A_2,\\[6pt]
$$
$$
\mbox{all other}~~ \nabla_{A_i}A_j=0.
$$
The tensor $R_{\cal D}$ of the following non-flat contact connection
is of Ricci type, while $R(X,\xi)Y$ is not identically zero, so the
almost $CR$ structure $({\cal E},\Phi_1)$ is integrable but the
almost contact metric structure $(\Phi_1,\xi^h,G_t)$ is not normal.
$$
\nabla_{A_1}A_2=-\frac{1}{3s}A_3+\frac{2}{3s}E_4,\quad
\nabla_{A_1}A_3=-\frac{2}{3}A_1,\quad
\nabla_{A_1}A_4=-\frac{1}{3}E_1,
$$
$$
\nabla_{A_2}A_1=-\frac{1}{3s}A_3-\frac{1}{3s}A_4,\quad
\nabla_{A_2}A_3=-\frac{1}{2}A_2,\quad
\nabla_{A_2}A_4=\frac{1}{3}A_2,
$$
$$
\nabla_{A_3}A_1=\frac{1}{3}A_1,\quad
\nabla_{A_3}A_2=-\frac{1}{3}A_2,\quad
\nabla_{A_3}A_3=-\frac{1}{3}A_3, \nabla_{A_3}A_4=\frac{1}{3}A_3
$$
$$
\nabla_{A_4}A_1=\frac{2}{3}A_1,\quad
\nabla_{A_4}A_2=-\frac{2}{3}A_2,\quad
\nabla_{A_4}A_3=-\frac{2}{3}A_4,
$$
$$
\nabla_{A_5}A_1=-\frac{1}{s}A_1,\quad
\nabla_{A_5}A_2=\frac{1}{s}A_2,
$$
$$
\mbox{all other}~~ \nabla_{A_i}A_j=0.
$$

Finally, note that the Lie algebra $\frak g=span\{E_0,\dots,E_4\}$
of the group $G$ is solvable. It is not unimodular
($Trace\,ad_{E_4}=2$), hence, by a result of Milnor \cite{Mil},
the group $G$ does not possess a discrete subgroup $\Gamma$ such
that the quotient $G/\Gamma$ is compact.

\end{document}